
\documentclass[a4paper]{amsart}

\usepackage{a4wide,amsfonts,amssymb,amsmath,color,todonotes,tikz-cd,,mathtools}
\usepackage[shortlabels]{enumitem}
\usepackage[english]{babel}

\allowdisplaybreaks

\DeclareFontFamily{U}{mathx}{}
\DeclareFontShape{U}{mathx}{m}{n}{<-> mathx10}{}
\DeclareSymbolFont{mathx}{U}{mathx}{m}{n}
\DeclareMathAccent{\widehat}{0}{mathx}{"70}
\DeclareMathAccent{\widecheck}{0}{mathx}{"71}

\newtheorem{lem}{Lemma}[section]
\newtheorem{defi}[lem]{Definition}
\newtheorem{theo}[lem]{Theorem}

\newtheorem{rem}[lem]{Remark}


\def\equi{\Leftrightarrow}
\def\qequi{\quad\equi\quad}

\newcommand{\wh}[1]{\widehat{#1}}
\newcommand{\wt}[1]{\widetilde{#1}}
\newcommand{\wc}[1]{\widecheck{#1}}

\def\n{\mathbb{N}}

\def\reals{\mathbb{R}}

\def\rt{\reals^{3}}

\def\rN{\reals^{N}}
\def\om{\Omega}
\def\ga{\Gamma}
\def\gat{\ga_{\mathsf{t}}}
\def\gan{\ga_{\mathsf{n}}}

\def\cd{c_{\Delta}}
\def\cdd{c_{\Delta^{2}}}
\def\cf{c_{\mathsf{f}}}
\def\cp{c_{\mathsf{p}}}
\def\cfp{c_{\mathsf{fp}}}
\def\ca{c_{\A}}
\def\bbS{\mathbb{S}}
\def\bbT{\mathbb{T}}
\def\sfL{\mathsf{L}}
\def\sfP{\mathsf{P}}
\def\sfH{\mathsf{H}}
\def\sfC{\mathsf{C}}
\def\calR{\mathcal{R}}
\def\calH{\mathcal{H}}
\def\bbH{\mathbb{H}}
\def\bbBH{\mathbb{BH}}

\def\ttD{\mathtt{D}}
\def\ttN{\mathtt{N}}

\newcommand{\Harm}{\calH}
\newcommand{\Harmd}{\Harm_{\ttD}}
\newcommand{\Harmn}{\Harm_{\ttN}}

\newcommand{\Leb}{\sfL}
\newcommand{\Lt}{\Leb^{2}}
\newcommand{\hLt}{\wh{\Leb}^{2}}
\newcommand{\tLt}{\wt{\Leb}^{2}}
\newcommand{\cLt}{\wc{\Leb}^{2}}
\newcommand{\hhLt}{\wh{\wh{\Leb}}^{2}}

\renewcommand{\P}{\sfP}
\renewcommand{\H}{\sfH}
\newcommand{\Hc}{\mathring{\sfH}}

\newcommand{\hH}{\wh{\H}}
\newcommand{\tH}{\wt{\H}}
\newcommand{\cH}{\wc{\H}}
\newcommand{\hhH}{\wh{\wh{\H}}}
\newcommand{\C}{\sfC}
\newcommand{\Cc}{\mathring{\sfC}}


\DeclareMathOperator{\tr}{tr}
\DeclareMathOperator{\p}{\partial}
\DeclareMathOperator{\id}{id}
\DeclareMathOperator{\na}{\nabla}
\DeclareMathOperator{\nac}{\mathring{\na}}
\DeclareMathOperator{\naga}{\na_{\ga}}
\DeclareMathOperator{\naes}{\na_{\emptyset}}
\DeclareMathOperator{\rot}{rot}
\DeclareMathOperator{\rotc}{\mathring{\rot}}
\def\div{\operatorname{div}}
\DeclareMathOperator{\divc}{\mathring{\div}}
\DeclareMathOperator{\divga}{\div_{\ga}}
\DeclareMathOperator{\dives}{\div_{\emptyset}}

\DeclareMathOperator{\Div}{Div}
\DeclareMathOperator{\divDiv}{divDiv}
\DeclareMathOperator{\Rot}{Rot}
\DeclareMathOperator{\symRot}{symRot}
\DeclareMathOperator{\devna}{dev\na}
\DeclareMathOperator{\nana}{\na\!\na}
\DeclareMathOperator{\nanaga}{\nana_{\ga}}
\DeclareMathOperator{\nanaes}{\nana_{\emptyset}}


\def\A{\operatorname{A}}

\def\cA{\operatorname{\mathcal{A}}}

\def\B{\operatorname{B}}
\def\Bc{\mathring{\B}}
\def\cB{\operatorname{\mathcal{B}}}
\def\cBc{\mathring{\cB}}
\def\Bd{\B_{\ttD}}
\def\Bn{\B_{\ttN}}
\def\cBd{\cB_{\ttD}}
\def\cBn{\cB_{\ttN}}
\def\L{\operatorname{L}}
\def\Lc{\mathring{\L}}
\def\cL{\operatorname{\mathcal{L}}}
\def\cLc{\mathring{\cL}}
\def\Ld{\L_{\ttD}}
\def\Ln{\L_{\ttN}}
\def\cLd{\cL_{\ttD}}
\def\cLn{\cL_{\ttN}}
\def\Bdd{\B_{\ttD,\ttD}}
\def\Bnn{\B_{\ttN,\ttN}}
\def\Bdn{\B_{\ttD,\ttN}}
\def\Bnd{\B_{\ttN,\ttD}}
\def\cBdd{\cB_{\ttD,\ttD}}
\def\cBnn{\cB_{\ttN,\ttN}}

\def\cBnd{\cB_{\ttN,\ttD}}
\newcommand{\norm}[1]{|#1|}
\newcommand{\bnorm}[1]{\big|#1\big|}
\newcommand{\scp}[2]{\langle#1,#2\rangle}


\title[Biharmonic Equations]
{Biharmonic Equations}
\author{Dirk Pauly}
\address{Institut f\"ur Analysis, Technische Universit\"at Dresden, Germany}
\email[Dirk Pauly]{dirk.pauly@tu-dresden.de}
\author{Alberto Valli}
\address{Department of Mathematics, University of Trento, Italy}
\email[Alberto Valli]{alberto.valli@unitn.it}
\keywords{biharmonic equations, de Rham complex, Hessian complex}
\subjclass{???}
\date{\today}
\thanks{{\it Corresponding Author}. Dirk Pauly}


\begin{document}


\begin{abstract}
In this note we devise and analyse well-posed variational formulations
and operator theoretical methods for boundary value problems 
associated to the biharmonic operator $\Delta^{2}$. 
Of particular interest are Neumann type and over-/underdetermined
(maximal/minimal) boundary value problems.
\end{abstract}


\vspace*{-10mm}
\maketitle
\setcounter{tocdepth}{3}
\tableofcontents



\section{Introduction}
\label{sec:intro}

Throughout this paper -- unless otherwise stated -- let $\om\subset\rN$, $N\in\n$, 
be a domain (connected and open set) with boundary $\ga:=\p\om$
for which we assume no regularity at all.

Recently, in the new edition of the book of Valli \cite{AlbV}, the second author discussed 
a Neumann boundary value problem for the biharmonic equation
and presented proofs for the well-posedness of related variational formulations
under very strong regularity assumptions ($\C^{4}$-boundary),
cf.~\cite[Section 5.6]{AlbV}.

In this context some interesting questions arose
which we will answer in this contribution using basic methods
from functional analysis, cf.~Section \ref{sec:tinyfatoolbox}.
In particular, we can reduce the regularity to Lipschitz boundaries and even beyond.
More precisely, we shall show that even under very weak regularity and boundedness assumptions on the underlying domain
(admissible domains, cf.~Definition \ref{def:admdom1} and Remark \ref{rem:admdom1})
we can answer the latter questions and extend the results to a whole zoo
of biharmonic operators, including, among others, the Neumann case.

The two relevant underlying Hilbert complexes are the de Rham complex
\begin{equation*}
\def\arrowlength{10ex}
\def\arrowdistance{.8}
\begin{tikzcd}[column sep=\arrowlength]
\Lt(\om) 
\ar[r, rightarrow, shift left=\arrowdistance, "\nac"] 
\ar[r, leftarrow, shift right=\arrowdistance, "-\div"']
&
\Lt(\om) 
\ar[r, rightarrow, shift left=\arrowdistance, "\rotc"] 
\ar[r, leftarrow, shift right=\arrowdistance, "\rot"']
& 
\Lt(\om) 
\arrow[r, rightarrow, shift left=\arrowdistance, "\divc"] 
\arrow[r, leftarrow, shift right=\arrowdistance, "-\na"']
& 
\Lt(\om),
\end{tikzcd}
\end{equation*}
cf.~\eqref{complex:derham1}, and the Hessian complex
\begin{equation*}
\def\arrowlength{10ex}
\def\arrowdistance{.8}
\begin{tikzcd}[column sep=\arrowlength]
\Lt(\om) 
\ar[r, rightarrow, shift left=\arrowdistance, "\mathring{\nana}"] 
\ar[r, leftarrow, shift right=\arrowdistance, "\divDiv_{\bbS}"']
&
\Lt_{\bbS}(\om) 
\ar[r, rightarrow, shift left=\arrowdistance, "\mathring{\Rot}_{\bbS}"] 
\ar[r, leftarrow, shift right=\arrowdistance, "\symRot_{\bbT}"']
& 
\Lt_{\bbT}(\om) 
\arrow[r, rightarrow, shift left=\arrowdistance, "\mathring{\Div}_{\bbT}"] 
\arrow[r, leftarrow, shift right=\arrowdistance, "-\devna"']
& 
\Lt(\om),
\end{tikzcd}
\end{equation*}
cf.~\eqref{complex:biharm}.

For simplicity and readability we restrict our analysis to homogeneous boundary conditions.
Note that as soon as proper trace and extension operators are available,
the inhomogeneous boundary value problems can be easily formulated as homogeneous ones.
A very general trace theory, meeting our needs, 
has been developed recently
by the first author and his collaborators in \cite{HPS2023a}.

For better readability of the introductory part
and to get quickly to the heart of matter,
most exact definitions of spaces and operators will be postponed 
to Section \ref{sec:optheo}.
Nevertheless we shall use our notations already in this introduction.
Throughout this paper we shall use, among the classical Sobolev spaces such as
$$\H^{1}(\om),
\qquad
\H(\rot,\om),
\qquad\H(\div,\om),$$
the Sobolev spaces
\begin{align*}
\H(\Delta,\om)&:=\big\{\varphi\in\Lt(\om):\Delta\varphi\in\Lt(\om)\big\},
&
\H(\Delta^{2},\om)&:=\big\{\varphi\in\Lt(\om):\Delta^{2}\varphi\in\Lt(\om)\big\},
\intertext{with kernels (harmonic resp. biharmonic functions)}
N(\Delta)=\bbH&:=\big\{\varphi\in\Lt(\om):\Delta\varphi=0\big\},
&
N(\Delta^{2})=\bbBH&:=\big\{\varphi\in\Lt(\om):\Delta^{2}\varphi=0\big\},
\end{align*}
and restrictions (projections orthogonal to the kernels)
\begin{align*}
\hLt(\om)&:=\Lt(\om)\cap\reals^{\bot},
&
\qquad\tLt(\Delta,\om)&:=\Lt(\om)\cap\bbH^{\bot},
&
\cLt(\Delta^{2},\om)&:=\Lt(\om)\cap\bbBH^{\bot},\\
\hH^{1}(\om)&:=\H^{1}(\om)\cap\reals^{\bot},
&
\qquad\tH(\Delta,\om)&:=\H(\Delta,\om)\cap\bbH^{\bot},
&
\cH(\Delta^{2},\om)&:=\H(\Delta^{2},\om)\cap\bbBH^{\bot},
\end{align*}
where $\bot$ denotes orthogonality in $\Lt(\om)$.

\subsection{Historical Remarks}
\label{sec:introhistory}

Let us consider a Neumann boundary value problems for the biharmonic operator,
that is
$$\Delta^{2}u=f\text{ in }\om
\qquad\text{and}\qquad
\Delta u=0,\quad
n\cdot\na\Delta u=0\text{ on }\ga,$$
more closely.
Later, the corresponding proper Neumann biharmonic operator will be denoted by
$$\cBn
=\cB_{\circ,\cdot}
=\cLc\cL,$$
see Section \ref{sec:zoo1}.
From the physical point of view this problem
is not the most interesting, as the model which describes the equilibrium position of an elastic thin plate, 
unconstrained on the boundary, involves other second order and third order boundary operators, 
in which the Poisson ratio also has a role (see Courant and Hilbert \cite[p.~250]{couhil} 
and more recently, e.g., Verchota \cite{ver}, Provenzano \cite{prov}; 
the original physical model even dates back to Kirchhoff and Kelvin). 

Instead of 
$$\Delta^{2}=\Delta\Delta,$$
the bi-Laplacian can also be factorised by means of 
$$\Delta^{2}=\divDiv\nana,$$
cf.~\cite{PZ2020a,PS2024a},
with different Neumann boundary conditions and hence different physical interpretations.
We shall come back later to this alternative,
cf.~Section \ref{sec:introalt} and Section \ref{sec:appbiharm}.

However, despite these remarks, we think that the Neumann problem for the biharmonic operator 
has a nice and simple mathematical structure, similar to that of other classical problems, 
and we find it interesting from the mathematical point of view. 
Moreover, it is the limiting case, for the Poisson ratio going to $1$ of suitable physical models.

Let us first compare a bit the Neumann problems for $-\Delta$ and $\Delta^{2}$.
As it is well-known, the Neumann boundary value problems 
for the negative Laplacian $-\Delta$
and what we call the bi-Laplacian/biharmonic operator $\Delta^{2}$,
i.e.,
\begin{align*}
-\Delta v&=g,
&
\Delta^{2}u&=f
&
&\text{in }\om,\\
n\cdot\na v&=0,
&
\Delta u&=0
&
&\text{on }\ga,\\
&&
n\cdot\na\Delta u&=0,
&
&\text{on }\ga,
\end{align*}
have similarities and differences.
In particular, for both of them the solution is not unique: 
adding to $v$ a constant $c\in\reals$ resp.~to $u$ a harmonic function $h\in\bbH$
gives another solution. 
Moreover, for both of them the data have to satisfy a compatibility condition
(Fredholm alternative):  
we have 
$$g\bot\,\reals\quad\text{for }-\Delta
\qquad\text{and}\qquad
f\bot\,\bbH\quad\text{for }\quad\;\Delta^{2}.$$
Note that in the smooth case we have
\begin{align*}
\scp{g}{1}_{\Lt(\om)}
&=-\int_{\om}\div\na v
=-\int_{\ga}n\cdot\na v
=0,\\
\scp{f}{h}_{\Lt(\om)}
&=\int_{\om}(\Delta^{2}u)h
=\int_{\ga}(n\cdot\na\Delta u)h
-\int_{\ga}\Delta u\,(n\cdot\na h)
=0.
\end{align*}

A first difference concerns the type of the boundary conditions: 
the Neumann boundary condition for $-\Delta$ satisfies the so-called complementing condition 
of Agmon, Douglis, and Nirenberg \cite{ADN}, 
that in the present situation simply says that the polynomial 
$t$ is not divisible by $t-i$.
This is not true for the Neumann boundary condition for $\Delta^{2}$: 
the complementing condition would require that the polynomials $1+t^{2}$ and $t+t^{3}$ 
were linearly independent modulo $(t-i)^{2}$, and it is well-known that this is not the case.
For the ease of the reader, let us readily show this last statement: we have
$$\frac{1+t^{2}}{(t-i)^{2}}
=1+\frac{2(1+it)}{(t-i)^{2}},\qquad
\frac{t+t^{3}}{(t-i)^{2}}=t+2i+\frac{2(i-t)}{(t-i)^{2}},$$
and the second remainder $2(i-t)$ is proportional to the first remainder $2(1+it)$ by a factor $i$.
Note that the difference between the two Neumann problems 
still shows up when considering the so-called Lopatinski\u{\i}--\v{S}apiro condition 
(see Wloka \cite[Examples 11.2 and 11.8]{Wloka}); 
in fact, it is known that the complementing condition and the Lopatinski\u{\i}--\v{S}apiro 
condition are equivalent (see, e.g., Negr\'on-Marrero and Montes-Pizarro \cite[Appendix A]{NegMon}).

Another difference seems to be related to well-posedness: in fact, 
the Neumann boundary value problem associated to $-\Delta$ is well-posed in a suitable space 
where uniqueness is recovered and for data which satisfy the necessary compatibility condition
(Fredholm alternative). 
This well-known result is an easy consequence of the Poincar\'e inequality,
cf.~Definition \ref{def:admdom1},
and the Riesz representation theorem, or of more elaborate arguments 
using linear functional analysis, cf.~the discussion of the Neumann Laplacian $\cLn$ from Section \ref{sec:standlap}.
On the contrary, well-posedness for the Neumann boundary value problem associated to the $\Delta^{2}$
operator seemed to be questionable so far 
(see, e.g., what is explicitly reported in Verchota \cite[p.~217 and Sc.~21]{ver}, 
and in a more indirect way in Renardy and Rogers \cite[Sc.~9.4.2 and Example 9.30]{RenRog}, 
Gazzola, Grunau and Sweers \cite[Sc.~2.3]{GGS}, 
Provenzano \cite[p.~1006]{prov}). 
In addition to this, it can be noted that Begehr \cite{beg} presents a long list of \emph{twelve} boundary value problems 
for the biharmonic operator that are either well-posed or solvable under suitable compatibility conditions, 
and in that list the Neumann problem is not included. 

Going a little bit more in depth, in Renardy and Rogers \cite{RenRog}, 
Gazzola, Grunau and Sweers \cite{GGS}, and Provenzano \cite{prov} 
the comments about the fact that the Neumann problem for the biharmonic operator $\Delta^{2}$ 
is possibly not well-posed are related to the fact that the complementing condition is not satisfied 
(in particular, this condition is assumed in the existence and uniqueness 
Theorems 2.16 and 2.20 in \cite{GGS}; there see also Remark 2.17). 
This is apparently to be meaningful, as in Agmon, Douglis and Nirenberg \cite[Sc.~10]{ADN} 
it is explicitly proved that the complementing condition is necessary 
for obtaining higher order a-priori estimates in H\"older and $\Leb^{p}$ spaces 
(in this respect, see also Lions and Magenes \cite[Chap.~2, Sc.~8.3 and Remark 9.8]{LioMag}). 

However, rather surprisingly, 
it turns out that this condition is not necessary for well-posedness 
in suitable Hilbert spaces, as the following example shows:
Consider the operator
$$\Delta^{2}+1.$$
Since the complementing condition only depends on the principal parts of the spatial and boundary operators, 
we are in the same situation of the Neumann problem for the biharmonic operator; 
therefore the complementing condition is also not satisfied.
However, the weak formulation of the problem (with homogeneous boundary data) reads:
For $f\in\Lt(\om)$ find $u\in\H(\Delta,\om)$ such that
\begin{align}
\label{eq:V}
\forall\,\varphi\in\H(\Delta,\om)
\qquad
\scp{\Delta u}{\Delta\varphi}_{\Lt(\om)}
+\scp{u}{\varphi}_{\Lt(\om)}
=\scp{ f}{\varphi}_{\Lt(\om)},
\end{align}
which is uniquely solvable by Riesz' representation theorem
with $\norm{u}_{\H(\Delta,\om)}\leq\norm{f}_{\Lt(\om)}$.
Being now evident that the complementing condition is not necessary for well-posedness in a suitable Hilbert space, 
in this paper we shall show that indeed, among others, the Neumann boundary value problem 
for the biharmonic operator is well-posed.

\subsection{Approach by Bi-Laplacians}
\label{sec:introbilap}

The approach in \cite[Section 5.6]{AlbV} is based on the representation 
of the biharmonic operator as bi-Laplacian $\Delta^{2}$ with 
Dirichlet/Neumann type and over-/underdetermined boundary conditions.

The story begins with the (full, not mixed) Dirichlet and Neumann boundary conditions for the negative Laplacian $-\Delta$.
These are the two reasonable boundary conditions for 
$$-\Delta=-\div\na,$$
resulting in the apparently four ``reasonable'' (full) boundary conditions 
for the biharmonic operator interpreted as bi-Laplacian.
More precisely:
For a possible solution $u$ of 
$$\Delta^{2}u=f\qquad\text{in }\om$$ 
we may impose a couple of the boundary conditions
\begin{align}
\label{eq:bcD}
u&=0
&
&\text{on }\ga,
&
&\text{\big(Dirichlet on $u$, i.e., $u\in\Hc^{1}(\om)$\big)}\\
\label{eq:bcN}
n\cdot\na u&=0
&
&\text{on }\ga,
&
&\text{\big(Neumann on $u$, i.e., $\na u\in\Hc(\div,\om)$\big)}\\
\label{eq:bcDD}
\Delta u&=0
&
&\text{on }\ga,
&
&\text{\big(Dirichlet on $\Delta u$, i.e., $\Delta u\in\Hc^{1}(\om)$\big)}\\
\label{eq:bcND}
n\cdot\na\Delta u&=0
&
&\text{on }\ga,
&
&\text{\big(Neumann on $\Delta u$, i.e., $\na\Delta u\in\Hc(\div,\om)$\big)}
\end{align}
as presented in the equations \cite[(5.24)-(5.27)]{AlbV}.
Some of these six pairs have prominent names:
\begin{align}
\begin{aligned}
\label{eq:bcdef1}
\text{Dirichlet:}&
\quad\eqref{eq:bcD}\wedge\eqref{eq:bcN}
&
\text{Navier:}&
\quad\eqref{eq:bcD}\wedge\eqref{eq:bcDD}\\
\text{Neumann:}&
\quad\eqref{eq:bcDD}\wedge\eqref{eq:bcND}
&
\text{Riquier:}&
\quad\eqref{eq:bcN}\wedge\eqref{eq:bcND}
\end{aligned}
\end{align}
As we will see later, cf.~Section \ref{sec:zoo1},
the latter four pairs
as well as $\eqref{eq:bcD}\wedge\eqref{eq:bcND}$,
then denoted by 
$$\cL\cLc,
\quad\cLc\cL,
\quad\cLd\cLd,
\quad\cLn\cLn,
\quad\cLn\cLd,$$
make $\Delta^{2}$ well-posed,
while $\eqref{eq:bcN}\wedge\eqref{eq:bcDD}$,
then denoted by $\cLd\cLn$, does not,
at least without further restrictions on the domain of definition.
The second author proposes variational formulations 
for the Dirichlet, Neumann, Navier, and Riquier
biharmonic problems,
but notices that for $\eqref{eq:bcD}\wedge\eqref{eq:bcND}$
and $\eqref{eq:bcN}\wedge\eqref{eq:bcDD}$ no well-posed
variational formulations are at hand.

\cite[Section 5.6.1]{AlbV} is then specifically devoted to the Neumann problem 
for the biharmonic equation $\eqref{eq:bcDD}\wedge\eqref{eq:bcND}$ ($\cLc\cL$).
The natural Sobolev spaces for the variational method are then given by 
$\H(\Delta,\om)$ from \eqref{eq:V}
as well as $\tH(\Delta,\om)$ 
and $\tLt(\om)$.
For $f\in\tLt(\om)$
the variational formulation is then 
to find $u\in\tH(\Delta,\om)$ such that
$$\forall\,\varphi\in\H(\Delta,\om)\;\big(\text{or }\tH(\Delta,\om)\big)\qquad
\scp{\Delta u}{\Delta\varphi}_{\Lt(\om)}
=\scp{ f}{\varphi}_{\Lt(\om)},$$
cf.~\eqref{eq:varBN} in Section \ref{sec:biharm1}.
It has been emphasised that the problem is well-posed if the
Poincar\'e type estimate
\begin{align}
\label{eq:introestDelta}
\exists\,c>0\quad\forall\,\varphi\in\tH(\Delta,\om)\qquad
\norm{\varphi}_{\Lt(\om)}\leq c\norm{\Delta\varphi}_{\Lt(\om)}
\end{align}
holds, which follows if the embedding
\begin{align}
\label{eq:introcpt}
\tH(\Delta,\om)\hookrightarrow\Lt(\om)
\end{align}
is compact. The second author could show by regularity and Rellich's selection theorem
that this is indeed true if $\om$ is bounded and of class $\C^{4}$,
since then the embedding $\tH(\Delta,\om)\hookrightarrow\H^{2}(\om)$ is continuous,
more precisely, 
\begin{align}
\label{eq:introX}
\tH(\Delta,\om)
=\Delta\big(\H^{4}(\om)\cap\Hc^{2}(\om)\big)
=\H^{2}(\om)\cap\bbH^{\bot}
\end{align}
with equivalent norms.
It has also been noted that $\H(\Delta,\om)\hookrightarrow\Lt(\om)$ is in general not compact
even for the unit ball and $N=2$.
The space $\Delta\big(\H^{4}(\om)\cap\Hc^{2}(\om)\big)$ (of high regularity)
has already been proposed by Provenzano \cite{prov} 
for studying an eigenvalue problem for the Neumann biharmonic operator. 

A theory relying on $\C^{4}$-boundaries is unsatisfying.
In this contribution we shall present a solution theory 
for quite a zoo of biharmonic problems -- including the Neumann type --
which works fine with bounded Lipschitz domains $\om$
or even beyond, cf.~the notion of admissible domains in Definition \ref{def:admdom1} and Remark \ref{rem:admdom1},
which need neither to be bounded nor smooth (Lipschitz).

\subsubsection{Sketch of Basic Ideas and Concepts}
\label{sec:introsketch}

We promote a more operator theoretical method
using basic functional analysis.
For simplicity, in this introduction, let us assume that $\om$ is a bounded Lipschitz domain.

We consider the two gradients
(minimal and maximal)
\begin{align*}
\naga:
\Hc^{1}(\om)\subset\Lt(\om)&\to\Lt(\om);
&
\phi&\mapsto\na\phi,\\
\naes:\H^{1}(\om)\subset\Lt(\om)&\to\Lt(\om),
\end{align*}
cf.~Section \ref{sec:standlap} and \cite{PS2022a}
for more details and results on the de Rham complex.
Note that we have the Friedrichs' and Poincar\'e's estimates,
cf.~Remark \ref{rem:fpest} and \eqref{eq:est1lap},
\begin{align}
\begin{aligned}
\label{eq:introfpest1}
\exists\,\cf>0
\qquad
\forall\,\varphi&\in\Hc^{1}(\om)
&
\norm{\varphi}_{\Lt(\om)}
&\leq\cf\norm{\na\varphi}_{\Lt(\om)},\\
\forall\,\varphi&\in\Hc^{2}(\om)
&
\norm{\varphi}_{\Lt(\om)}
&\leq\cf^{2}\norm{\Delta\varphi}_{\Lt(\om)},\\
\exists\,\cp>0
\qquad
\forall\,\varphi&\in\hH^{1}(\om)=\H^{1}(\om)\cap\reals^{\bot}
\qquad
&
\norm{\varphi}_{\Lt(\om)}
&\leq\cp\norm{\na\varphi}_{\Lt(\om)}.
\end{aligned}
\end{align}
With \eqref{eq:introfpest1}
$\naga$ and $\naes$ are well-defined, i.e.,
densely defined and closed linear operators,
with closed ranges,
and so are their Hilbert space adjoints
(maximal and minimal divergence)
\begin{align*}
-\dives=(\naga)^{*}:
\H(\div,\om)\subset\Lt(\om)&\to\Lt(\om);
&
\Phi&\mapsto\div\Phi,\\
-\div_{\ga}=(\naes)^{*}:\Hc(\div,\om)\subset\Lt(\om)&\to\Lt(\om).
\end{align*}
Then the Dirichlet and Neumann negative Laplacians 
\begin{align*}
\Ld=\naga^{*}\naga=-\dives\naga:
D(\Ld)\subset\Lt(\om)&\to\Lt(\om);
&
\phi&\mapsto-\div\na\phi,\\
\Ln=\naes^{*}\naes=-\divga\naes:
D(\Ln)\subset\Lt(\om)&\to\Lt(\om)
\end{align*}
with domains of definition
\begin{align*}
D(\Ld)
&=\big\{\varphi\in\Hc^{1}(\om):\na\varphi\in\H(\div,\om)\big\}
=\big\{\varphi\in\Hc^{1}(\om):\Delta\varphi\in\Lt(\om)\big\},\\
D(\Ln)&=\big\{\varphi\in\H^{1}(\om):\na\varphi\in\Hc(\div,\om)\big\},
\end{align*}
are selfadjoint and non-negative
with kernels $N(\Ld)=\{0\}$ and $N(\Ln)=\reals$. 
With the closed ranges we have the Fredholm alternatives
$$R(\Ld)=N(\Ld)^{\bot}=\Lt(\om),\qquad
R(\Ln)=N(\Ln)^{\bot}=\Lt(\om)\cap\reals^{\bot}=\hLt(\om).$$
Thus $\cLd=\Ld$
and $\cLn=\Ln|_{\reals^{\bot}}$ are bijective
and the inverse operators 
$$\cLd^{-1}:\Lt(\om)\to D(\cLd),\qquad
\cLn^{-1}:\hLt(\om)\to D(\Ln)\cap\reals^{\bot}=D(\cLn)$$
are bounded by \eqref{eq:introfpest1}.

We may also consider over- and underdetermined negative Laplacians $\Lc$ and $\L$,
respectively, cf.~Section \ref{sec:odetlap}.
Now \eqref{eq:introfpest1}, cf.~Lemma \ref{lem:friedrichsH2} and Lemma \ref{lem:fatoolbox1},
yields that 
\begin{align*}
\Lc:\Hc^{2}(\om)\subset\Lt(\om)&\to\Lt(\om);
&
\varphi&\mapsto-\Delta\varphi,\\
\L:=\Lc^{*}:\H(\Delta,\om)\subset\Lt(\om)&\to\Lt(\om)
\end{align*}
are densely defined and closed with kernels
$N(\Lc)=\{0\}$ and $N(\L)=\bbH$
and closed ranges
$$R(\Lc)=N(\L)^{\bot}
=\Lt(\om)\cap\bbH^{\bot}
=\tLt(\om),\qquad
R(\L)=N(\Lc)^{\bot}
=\Lt(\om).$$
Therefore, $\cLc=\Lc$
and $\cL=\L|_{\bbH^{\bot}}$ are bijective
and the inverse operators 
$$\cLc{}^{-1}:\tLt(\om)\to D(\cLc),\qquad
\cL^{-1}:\Lt(\om)\to D(\cL)=D(\L)\cap\bbH^{\bot}=\tH(\Delta,\om)$$
are bounded by \eqref{eq:introfpest1}.

One could say that the dual (adjoint) pair
$\Lc$ and $\L$ defines the minimal and the maximal
$\Lt(\om)$-realisations of the negative Laplacian, in the sense
$$\Lc\subset\Ld,\Ln\subset\L$$
with the two selfadjoint realisations $\Ld$ and $\Ln$ in-between.

Let $f\in\Lt(\om)$,
$\wh{f}\in\hLt(\om)$,
and 
$\wt{f}\in\tLt(\om)$.
Then:
\begin{itemize}
\item
$\mathring{u}=\cLc{}^{-1}\wt{f}\in D(\cLc)$ 
is the unique solution of the overdetermined negative Laplace boundary value problem 
$\cLc\mathring{u}=\wt{f}$.
\item
$u_{\ttD}=\cLd^{-1}f\in D(\cLd)$ 
is the unique solution of the Dirichlet negative Laplace boundary value problem
$\cLd u_{\ttD}=f$.
\item
$u_{\ttN}=\cLn^{-1}\wh{f}\in D(\cLn)$ 
is the unique solution of the Neumann negative Laplace boundary value problem
$\cLn u_{\ttN}=\wh{f}$.
\item
$u=\cL^{-1}f\in D(\cL)$ 
is the unique solution of the underdetermined negative Laplace boundary value problem 
$\cL u=f$.
\end{itemize}

In classical terms we have
\begin{align*}
-\Delta\mathring{u}&=\wt{f},
&
-\Delta u_{\ttD}&=f,
&
-\Delta u_{\ttN}&=\wh{f},
&
-\Delta u&=f
&
&\text{in }\om,\\
\mathring{u}&=0,
&
u_{\ttD}&=0
&
&
&
&
&
&\text{on }\ga,\\
n\cdot\na\mathring{u}&=0,
&
&
&
n\cdot\na u_{\ttN}&=0
&
&
&
&\text{on }\ga,\\
&
&
&
&
u_{\ttN}&\;\bot\;\reals,
&
u&\;\bot\;\bbH.
\end{align*}

To find $u_{\ttD}\in D(\cLd)\subset\Hc^{1}(\om)$ 
and $u_{\ttN}\in D(\cLn)\subset\hH^{1}(\om)$
by variational methods one may consider
the well-known text book formulations
\begin{align*}
\forall\,\varphi&\in\Hc^{1}(\om)
&
\scp{\na u_{\ttD}}{\na\varphi}_{\Lt(\om)}
&=\scp{f}{\varphi}_{\Lt(\om)},\\
\forall\,\psi&\in\H^{1}(\om)\;\big(\text{or }\hH^{1}(\om)\big)
&
\scp{\na u_{\ttN}}{\na\psi}_{\Lt(\om)}
&=\scp{\wh{f}}{\psi}_{\Lt(\om)}.
\end{align*}
In Remark \ref{rem:biharmoverdetlap}
we suggest variational methods
to find the two solutions $\mathring{u}\in D(\cLc)=\Hc^{2}(\om)$ 
and $u\in D(\cL)=\tH(\Delta,\om)$.

Let us go back to the different biharmonic operators
from the beginning of Section \ref{sec:introbilap},
cf.~Section \ref{sec:biharm1} and Section \ref{sec:zoo1}.
From our operator theoretical point of view,
these six different boundary value problems for the biharmonic equation,
are given by the following table:
\begin{align}
\begin{aligned}
\label{eq:bcdef2}
\text{Dirichlet:}&
\quad\eqref{eq:bcD}\wedge\eqref{eq:bcN}
&&\text{given by }\Bd=\L\Lc
&&\text{with bd right inverse }\cBd^{-1}=\cLc{}^{-1}\cL^{-1}\\
\text{Neumann:}&
\quad\eqref{eq:bcDD}\wedge\eqref{eq:bcND}
&&\text{given by }\Bn=\Lc\L
&&\text{with bd right inverse }\cBn^{-1}=\cL^{-1}\cLc{}^{-1}\\
\text{Navier:}&
\quad\eqref{eq:bcD}\wedge\eqref{eq:bcDD}
&&\text{given by }\Bdd=\Ld\Ld
&&\text{with bd right inverse }\cBdd^{-1}=\cLd^{-1}\cLd^{-1}\\
\text{Riquier:}&
\quad\eqref{eq:bcN}\wedge\eqref{eq:bcND}
&&\text{given by }\Bnn=\Ln\Ln
&&\text{with bd right inverse }\cBnn^{-1}=\cLn^{-1}\cLn^{-1}\\
\text{N-D:}&
\quad\eqref{eq:bcD}\wedge\eqref{eq:bcND}
&&\text{given by }\Bnd=\Ln\Ld
&&\text{with bd right inverse }\cB_{\ttN,\ttD}^{-1}=\cLd^{-1}\cLn^{-1}\\
\text{D-N:}&
\quad\eqref{eq:bcN}\wedge\eqref{eq:bcDD}
&&\text{given by }\Bdn=\Ld\Ln
\end{aligned}
\end{align}

By our theory the inverses of the first five biharmonic operators are well-defined,
while the last inverse of the biharmonic operator $\Bdn$ is not in terms of $\cLn^{-1}\cLd^{-1}$.
Since 
$$\Lc{}^{*}=\L,\qquad
\L^{*}=\Lc,\qquad
\Ld^{*}=\Ld,\qquad
\Ln^{*}=\Ln,$$
we see that the operators corresponding to 
the Dirichlet, Neumann, Navier, and Riquier biharmonic problems
are selfadjoint (and non-negative), while the others are not.

Let $f\in\Lt(\om)$,
$\wh{f}\in\hLt(\om)$,
and 
$\wt{f}\in\tLt(\om)$.
Then:
\begin{itemize}
\item
$u_{\ttD}=\cBd^{-1}=\cLc^{-1}\cL^{-1}f\in D(\cBd)$ 
is the unique solution of the Dirichlet biharmonic boundary value problem
$\cBd u_{\ttD}=f$.
\item
$u_{\ttD\ttD}=\cBdd^{-1}=\cLd^{-1}\cLd^{-1}f\in D(\cBdd)$ 
is the unique solution of the Navier biharmonic boundary value problem
$\cBdd u_{\ttD\ttD}=f$.
\item
$u_{\ttN}=\cBn^{-1}=\cL^{-1}\cLc^{-1}\wt{f}\in D(\cBn)$ 
is the unique solution of the Neumann biharmonic boundary value problem
$\cBn u_{\ttN}=\wt{f}$.
\item
$u_{\ttN\ttN}=\cBnn^{-1}=\cLn^{-1}\cLn^{-1}\wh{f}\in D(\cBnn)$ 
is the unique solution of the Riquier biharmonic boundary value problem
$\cBnn u_{\ttN\ttN}=\wh{f}$.
\item
$u_{\ttN\ttD}=\cBnd^{-1}=\cLd^{-1}\cLn^{-1}\wh{f}\in D(\cBnd)$ 
is the unique solution of the N-D biharmonic boundary value problem
$\cBnd u_{\ttN\ttD}=\wh{f}$.
\end{itemize}

In classical terms we have
\begin{align*}
\Delta^{2}u_{\ttD}&=f,
&
\Delta^{2}u_{\ttD\ttD}&=f,
&
\Delta^{2}u_{\ttN}&=\wt{f},
&
\Delta^{2}u_{\ttN\ttN}&=\wh{f},
&
\Delta^{2}u_{\ttN\ttD}&=\wh{f}
&
&\text{in }\om,\\
u_{\ttD}&=0,
&
u_{\ttD\ttD}&=0,
&
&
&
&
&u_{\ttN\ttD}&=0
&
&\text{on }\ga,\\
n\cdot\na u_{\ttD}&=0,
&
&
&
&
&
n\cdot\na u_{\ttN\ttN}&=0
&
&
&
&\text{on }\ga,\\
&
&
\Delta u_{\ttD\ttD}&=0,
&
\Delta u_{\ttN}&=0
&
&
&
&
&
&\text{on }\ga,\\
&
&
&
&
n\cdot\na\Delta u_{\ttN}&=0,
&
n\cdot\na\Delta u_{\ttN\ttN}&=0,
&
n\cdot\na\Delta u_{\ttN\ttD}&=0
&
&\text{on }\ga,\\
&
&
&
&
u_{\ttN}&\;\bot\;\bbH,
&
u_{\ttN\ttN}&\;\bot\;\reals,
&
&\\
&
&
&
&
&
&
\Delta u_{\ttN\ttN}&\;\bot\;\reals,
&
\Delta u_{\ttN\ttD}&\;\bot\;\reals.
\end{align*}

In fact, in Section \ref{sec:zoo1} and Section \ref{sec:odetbiharm} 
we present \emph{eighteen} different biharmonic operators,
and it turns out that \emph{thirteen} of those are well defined 
and lead to uniquely solvable boundary value problems for the bi-Laplacian.
Our general theory extends also to mixed boundary conditions 
giving even more well defined bi-Laplacians,
cf.~Section \ref{sec:mixedbc1}.

\begin{rem}
\label{rem:intronoreg1}
Concerning \eqref{eq:introestDelta}, \eqref{eq:introcpt}, and \eqref{eq:introX}
we can show remarkable results for all bounded domains $\om$.
Note that no(!) regularity assumptions on $\om$ are needed at all.
\begin{itemize}
\item[\bf(i)]
In Theorem \ref{theo:cptharm} we prove that indeed
$$D(\cL)=\tH(\Delta,\om)=\H(\Delta,\om)\cap\bbH^{\bot}\hookrightarrow\Lt(\om)$$
is compact. 
\item[\bf(ii)]
Moreover, for all 
$\varphi\in D(\cLc)=\Hc^{2}(\om)$
and all $\varphi\in D(\cL)$
we have
$$\norm{\varphi}_{\Lt(\om)}
\leq\cf^{2}\norm{\Delta\varphi}_{\Lt(\om)},$$
cf.~Remark \ref{rem:fpest}.
\item[\bf(iii)]
For the variational space we see 
\begin{align*}
\tH(\Delta,\om)
=\cLc\cLc{}^{-1}\tH(\Delta,\om)
&=\Delta\big\{\varphi\in\Hc^{2}(\om):\Delta\varphi\in D(\cL)\big\}\\
&=\Delta\big\{\varphi\in\Hc^{2}(\om):\Delta^{2}\varphi\in\Lt(\om)\big\},
\end{align*}
which -- in case of a $\C^{4}$-boundary $\ga$ --
equals the space $\Delta\big(\H^{4}(\om)\cap\Hc^{2}(\om)\big)$
by standard regularity theory for the Dirichlet bi-Laplacian.
\end{itemize}
\end{rem}

\begin{rem}
\label{rem:intronoreg2}
Analogously we have:
\begin{itemize}
\item[\bf(i)]
Theorem \ref{theo:cptbiharm} shows that 
$$D(\cB)=\cH(\Delta^{2},\om)=\H(\Delta^{2},\om)\cap\bbBH^{\bot}\hookrightarrow\Lt(\om)$$
is compact.
\item[\bf(ii)]
For all 
$\varphi\in D(\cBc)=\Hc^{4}(\om)$
and all $\varphi\in D(\cB)$
we have
$$\norm{\varphi}_{\Lt(\om)}
\leq\cf^{4}\norm{\Delta^{2}\varphi}_{\Lt(\om)},$$
cf.~\eqref{eq:est1bilap}.
\item[\bf(iii)]
For the variational space we have
\begin{align*}
\cH(\Delta^{2},\om)
=\cBc\cBc{}^{-1}\cH(\Delta^{2},\om)
&=\Delta^{2}\big\{\varphi\in\Hc^{4}(\om):\Delta^{2}\varphi\in D(\cB)\big\}\\
&=\Delta^{2}\big\{\varphi\in\Hc^{4}(\om):\Delta^{4}\varphi\in\Lt(\om)\big\},
\end{align*}
which -- in case of a $\C^{8}$-boundary $\ga$ --
equals the space $\Delta^{2}\big(\H^{8}(\om)\cap\Hc^{4}(\om)\big)$
by standard regularity theory for higher order elliptic Dirichlet problems.
\end{itemize}
\end{rem}

\subsubsection{Exchanging Boundary Conditions by Laplacians}
\label{sec:introtrick}

Using the latter four different Laplacians,
cf.~\eqref{eq:cLinv},
we can solve the 
Neumann biharmonic problem by the Dirichlet biharmonic problem
and vice versa. For example, 
$$\cBn^{-1}
=\cLc\cLc{}^{-1}\cL^{-1}\cLc{}^{-1}
=\cLc\cBd^{-1}\cLc{}^{-1},$$
i.e., the solution $u=\cBn^{-1}f$ of the Neumann biharmonic problem
can be found by solving for the overdetermined Laplacian $\cLc{}^{-1}f$,
then for the Dirichlet biharmonic problem $v=\cBd^{-1}\cLc{}^{-1}f$,
and finally taking $u=\Delta v$.
Analogously, we have 
$$\cBd^{-1}
=\cL\cL^{-1}\cLc{}^{-1}\cL^{-1}
=\cL\cBn^{-1}\cLc{}^{-1}.$$

This ``trick'' can extended to a lot more ``allowed'' combinations, such as
$$\cB_{\ttN,\ttD}^{-1}
=\cLd\cLd^{-1}\cLd^{-1}\cLn^{-1}
=\cLd\cBdd^{-1}\cLn^{-1},$$
just to mention one of many options.

\subsection{Alternative Approach by the Hessian Complex}
\label{sec:introalt}

For $\phi\in\C^{\infty}(\rt)$ we have point-wise 
\begin{align}
\label{eq:DeltadivDivnana}
\Delta^{2}\phi
=\sum_{i,j}\p_{i}^{2}\p_{j}^{2}\phi
=\sum_{i,j}\p_{i}\p_{j}\p_{i}\p_{j}\phi
=\divDiv\nana\phi,
\end{align}
which then extends also to distributions $\phi$.
Here, $\nana\phi$ denotes the Hessian of $\phi$
and $\Div$ acts as row-wise incarnation of $\div$.
Note that $\divDiv$ is the formal adjoint of $\nana$.
In the following we use again the basic concepts of functional analysis, 
cf.~Section \ref{sec:tinyfatoolbox}.

By \eqref{eq:DeltadivDivnana}
another way to look at the biharmonic equation,
which respects more the underlining geometry (Hessian Hilbert complex)
of the problem and the corresponding operators,
is to investigate the two Hessian operators
($\bbS$ indicates symmetric tensors)
\begin{align*}
\nanaga:
\Hc^{2}(\om)\subset\Lt(\om)&\to\Lt_{\bbS}(\om);
&
\phi&\mapsto\nana\phi,\\
\nanaes:\H^{2}(\om)\subset\Lt(\om)&\to\Lt_{\bbS}(\om),
\end{align*}
cf.~Section \ref{sec:appbiharm} and \cite{PZ2020a,AH2021a,PS2024a}
for more details and results on the Hessian complex.
Note that Friedrichs' estimate \eqref{eq:introfpest1} shows
\begin{align}
\label{eq:introfest1}
\exists\,\cf>0\quad
\forall\,\varphi\in\Hc^{2}(\om)\qquad
\norm{\varphi}_{\Lt(\om)}
\leq\cf\norm{\na\varphi}_{\Lt(\om)}
\leq\cf^{2}\norm{\nana\varphi}_{\Lt(\om)}.
\end{align}
Moreover, $\varphi\in\Lt(\om)$ with $\nana\varphi\in\Lt(\om)$
implies $\varphi\in\H^{2}(\om)$ by the Ne\v{c}as/Lions lemma,
and we have Poincar\'e's estimate
\begin{align}
\label{eq:intropest1}
\exists\,\cp>0\quad
\forall\,\varphi\in\hhH^{2}(\om)=\H^{2}(\om)\cap\P_{1}^{\bot}\qquad
\norm{\varphi}_{\Lt(\om)}
\leq\cp\norm{\na\varphi}_{\Lt(\om)}
\leq\cp^{2}\norm{\nana\varphi}_{\Lt(\om)},
\end{align}
where $\P_{1}$ denotes the first order polynomials.
With \eqref{eq:introfest1}, Ne\v{c}as' lemma, and \eqref{eq:intropest1}
$\nanaga$ and $\nanaes$ are well-defined, i.e.,
densely defined and closed linear operators.
Their Hilbert space adjoints are given by 
\begin{align*}
\divDiv_{\bbS,\emptyset}=(\nanaga)^{*}:
\H_{\bbS}(\divDiv,\om)\subset\Lt_{\bbS}(\om)&\to\Lt(\om);
&
\Phi&\mapsto\divDiv\Phi,\\
\divDiv_{\bbS,\ga}=(\nanaes)^{*}:\Hc_{\bbS}(\divDiv,\om)\subset\Lt_{\bbS}(\om)&\to\Lt(\om),
\end{align*}
where $\H_{\bbS}(\divDiv,\om)=\big\{\Phi\in\Lt_{\bbS}(\om):\divDiv\Phi\in\Lt(\om)\big\}$
and $\Hc_{\bbS}(\divDiv,\om)$ denotes the respective closure of symmetric test fields
in the graph norm.

We can then investigate the biharmonic operators 
\begin{align*}
\B_{\ga}=\nanaga^{*}\nanaga=\divDiv_{\bbS,\emptyset}\nanaga:
D(\B_{\ga})\subset\Lt(\om)&\to\Lt(\om);
&
\phi&\mapsto\divDiv\nana\phi,\\
\B_{\emptyset}=\nanaes^{*}\nanaes=\divDiv_{\bbS,\ga}\nanaes:
D(\B_{\emptyset})\subset\Lt(\om)&\to\Lt(\om),
\end{align*}
which may be called Dirichlet resp. Neumann biharmonic operator as well.
Note that indeed by Remark \ref{rem:H2bc}
the ``new'' Dirichlet biharmonic operator equals the ``old'' one from above, i.e.,
$$\B_{\ga}=\Bd=\L\Lc,$$
but the ``new'' Neumann biharmonic operator $\B_{\emptyset}$
is not a biharmonic operator with any combination of the boundary conditions 
\eqref{eq:bcD}-\eqref{eq:bcND}.
In fact, while the boundary conditions of $\Bn=\Lc\L$
are imposed on the scalar $\Delta u$,
the boundary conditions of $\B_{\emptyset}$
are imposed on the symmetric tensor $S=\nana u$. 

$\B_{\ga}$ and $\B_{\emptyset}$ are selfadjoint and non-negative
with kernels $N(\B_{\ga})=\{0\}$ and $N(\B_{\emptyset})=\P_{1}$. 
By \eqref{eq:introfest1} and \eqref{eq:intropest1}
the ranges are closed and we have the Fredholm alternatives
$$R(\B_{\ga})=N(\B_{\ga})^{\bot}=\Lt(\om),\qquad
R(\B_{\emptyset})=N(\B_{\emptyset})^{\bot}=\Lt(\om)\cap\P_{1}^{\bot}=:\hhLt(\om).$$
Therefore, $\cB_{\ga}=\B_{\ga}$
and $\cB_{\emptyset}=\B_{\emptyset}|_{\P_{1}^{\bot}}$ are bijective
and the inverse operators 
$$\cB_{\ga}^{-1}:\Lt(\om)\to D(\cB_{\ga}),\qquad
\cB_{\emptyset}^{-1}:\hhLt(\om)\to D(\B_{\emptyset})\cap\P_{1}^{\bot}=D(\cB_{\emptyset})$$
are bounded, i.e., $\norm{\cB_{\ga}^{-1}}\leq\cf^{4}$ and $\norm{\cB_{\emptyset}^{-1}}\leq\cp^{4}$.

Let $f\in\Lt(\om)$ and $f_{1}\in\hhLt(\om)$.
Then $u_{\ga}=\cB_{\ga}^{-1}f\in D(\cB_{\ga})$ 
is the unique solution of
the Dirichlet biharmonic boundary value problem
$\cB_{\ga}u_{\ga}=f$, and 
$u_{\emptyset}=\cB_{\emptyset}^{-1}f_{1}\in D(\cB_{\emptyset})$ 
is the unique solution of
the Neumann biharmonic boundary value problem 
$\cB_{\emptyset}u_{\emptyset}=f_{1}$.
In classical terms we have
\begin{align*}
\divDiv\nana u_{\ga}&=f,
&
\divDiv\nana u_{\emptyset}&=f_{1}
&
&\text{in }\om,\\
u_{\ga}&=0,
&
(\nana u_{\emptyset})\,n&=0
&
&\text{on }\ga,\\
\na u_{\ga}&=0,
&
(\Div\nana u_{\emptyset})\cdot n&=0
&
&\text{on }\ga,\\
&&
u_{\emptyset}&\;\bot\;\P_{1}.
\end{align*}

To find $u\in D(\cB_{\ga})\subset\Hc^{2}(\om)$ 
and $v\in D(\cB_{\emptyset})\subset\hhH^{2}(\om):=\H^{2}(\om)\cap\P_{1}^{\bot}$
by variational methods one may consider
\begin{align*}
\forall\,\varphi&\in\Hc^{2}(\om)
&
\scp{\nana u}{\nana\varphi}_{\Lt_{\bbS}(\om)}
&=\scp{f}{\varphi}_{\Lt(\om)},\\
\forall\,\psi&\in\H^{2}(\om)\;\big(\text{or }\hhH^{2}(\om)\big)
&
\scp{\nana v}{\nana\psi}_{\Lt_{\bbS}(\om)}
&=\scp{g}{\psi}_{\Lt(\om)}.
\end{align*}

It is worth noting that the results in \cite[Section 4]{PZ2020a} show that 
the Dirichlet biharmonic problem
$$\cB_{\ga}u=\divDiv_{\bbS,\emptyset}\nanaga u=f$$
splits up into a sequence of \emph{three second order} (elliptic) boundary value problems 
indicated by the matrix representation
$$\begin{bmatrix}
\Ld & \tr\symRot_{\bbT,\emptyset} & 3\\
0 & \Rot_{\bbS,\ga}\symRot_{\bbT,\emptyset} & \Rot_{\bbS,\ga}\tr^{*}\\
0 & 0 & \Ld
\end{bmatrix}
\begin{bmatrix}
u\\
E\\
v
\end{bmatrix}
=\begin{bmatrix}
0\\
0\\
f
\end{bmatrix},$$
cf.~Section \ref{sec:appbiharm} for definitions.
The matrix has the structure 
$$\begin{bmatrix}
a^{*}a & c & 3\\
0 & b^{*}b & c^{*}\\
0 & 0 & a^{*}a
\end{bmatrix}$$
with $a=\A_{0}=\nac$ taken from the de Rham complex \eqref{complex:derham1},
and $b=\A_{1}^{*}=\symRot_{\bbT,\emptyset}$ and $c=\tr b$ 
from the Hessian complex \eqref{complex:biharm}.

\section{Operator Theory for Biharmonic Equations}
\label{sec:optheo}

Let us begin with some abstract basics.

\subsection{Tiny FA-ToolBox}
\label{sec:tinyfatoolbox}

We recall a few results from linear functional analysis.
In particular, we use fundamental results from the so-called FA-ToolBox, see, e.g.,
 \cite{P2019a,P2020a}, cf.~\cite{P2017a,P2019b,PS2022a,PS2022b,PS2024a,PZ2020a,PZ2023a}.

\subsubsection{Single Operators}
\label{sec:tinyfatoolbox1}

Let us consider a densely defined and closed linear operator $\A$
between two Hilbert spaces $\H_{0}$ and $\H_{1}$ 
together with its (densely defined and closed) Hilbert space adjoint $\A^{*}$, that is
$$\A:D(\A)\subset\H_{0}\to\H_{1},\qquad
\A^{*}:D(\A^{*})\subset\H_{1}\to\H_{0}.$$
In general, $\A$ and $\A^{*}$ are unbounded
and characterised by
$$\forall\,x\in D(\A)\quad
\forall\,y\in D(\A^{*})\qquad
\scp{\A x}{y}_{\H_{1}}
=\scp{x}{\A^{*}y}_{\H_{0}}.$$
Since $\A^{**}=\A$ we call $(\A,\A^{*})$ a dual pair.
Note that $R(\A)$ is closed if and only if
$R(\A^{*})$ is closed by the closed range theorem.
Moreover, the projection theorem 
yields the orthogonal decompositions (Helmholtz type decompositions)
\begin{align}
\label{eq:helm1}
\H_{0}=\overline{R(\A^{*})}\oplus_{\H_{0}}N(\A),\qquad
\H_{1}=\overline{R(\A)}\oplus_{\H_{1}}N(\A^{*}),
\end{align}
which suggest to investigate the injective restrictions,
also called reduced operators and denoted by calligraphical letters,
$$\cA:=\A|_{N(\A)^{\bot}},\qquad
\cA^{*}:=\A^{*}|_{N(\A^{*})^{\bot}},$$
more precisely,
\begin{align*}
\cA:D(\cA)\subset N(\A)^{\bot}&\to\overline{R(\A)}=N(\A^{*})^{\bot},
&
D(\cA)&:=D(\A)\cap N(\A)^{\bot},\\
\cA^{*}:D(\cA^{*})\subset N(\A^{*})^{\bot}&\to\overline{R(\A^{*})}=N(\A)^{\bot},
&
D(\cA^{*})&:=D(\A^{*})\cap N(\A^{*})^{\bot}.
\end{align*}
$(\cA,\cA^{*})$ are also densely defined and closed
forming another dual pair.
Moreover, by \eqref{eq:helm1} we have
\begin{align}
\label{eq:helmrange}
\begin{aligned}
D(\A)&=D(\cA)\oplus_{\H_{0}}N(\A),
&
D(\A^{*})&=D(\cA^{*})\oplus_{\H_{1}}N(\A^{*}),\\
R(\A)&=R(\cA),
&
R(\A^{*})&=R(\cA^{*}).
\end{aligned}
\end{align}
Here we have used the symbols
$\overline{\,\cdot\,}$, $\oplus$, and $\bot$
for the closure, the orthogonal sum, and the orthogonal complement, respectively. 

From \cite[Lemma 4.1, Remark 4.2]{P2019b}, see also 
 \cite[Lemma 2.1, Lemma 2.2]{P2020a} or \cite[Lemma 2.1, Lemma 2.4]{PV2020a},
we cite the following elementary result.

\begin{lem}[fundamental FA-ToolBox lemma 1]
\label{lem:fatoolbox1}
The following assertions are equivalent:
\begin{itemize}
\item[\bf(i)]
$\exists\,\ca>0\quad\forall\,x\in D(\cA)\qquad\norm{x}_{\H_{0}}\leq\ca\norm{\A x}_{\H_{1}}$
\item[\bf(i$^{*}$)]
$\exists\,c_{\A^{*}}>0\quad\forall\,y\in D(\cA^{*})\qquad\norm{y}_{\H_{1}}\leq c_{\A^{*}}\norm{\A^{*}y}_{\H_{0}}$
\item[\bf(ii)]
$R(\A)$ is closed.
\item[\bf(ii$^{*}$)]
$R(\A^{*})$ is closed.
\item[\bf(iii)]
$\cA^{-1}:R(\A)\to D(\cA)$ is bounded.
\item[\bf(iii$^{*}$)]
$(\cA^{*})^{-1}:R(\A^{*})\to D(\cA^{*})$ is bounded.
\end{itemize}
All these assertions hold,
if the embedding $D(\cA)\hookrightarrow H_{0}$ is compact.
\begin{itemize}
\item[\bf(iv)]
$D(\cA)\hookrightarrow H_{0}$ is compact,
if and only if $D(\cA^{*})\hookrightarrow H_{1}$ is compact.
\end{itemize}
\end{lem}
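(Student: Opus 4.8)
The plan is to establish the ring of equivalences among \textbf{(i)}--\textbf{(iii$^{*}$)} first and then treat the two compactness statements separately. The equivalence \textbf{(i)}$\Leftrightarrow$\textbf{(iii)} is essentially a reformulation: $\cA$ is injective by construction, so $\cA^{-1}\colon R(\A)\to D(\cA)$ exists, and writing $x:=\cA^{-1}y$ one has in the graph norm $\norm{x}_{D(\cA)}^{2}=\norm{x}_{\H_{0}}^{2}+\norm{\A x}_{\H_{1}}^{2}=\norm{x}_{\H_{0}}^{2}+\norm{y}_{\H_{1}}^{2}$, so boundedness of $\cA^{-1}$ is equivalent to \textbf{(i)}. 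For \textbf{(i)}$\Rightarrow$\textbf{(ii)} I would take $\A x_{n}\to y$ with $x_{n}\in D(\cA)$; estimate \textbf{(i)} makes $(x_{n})$ Cauchy, hence $x_{n}\to x$, and closedness of $\A$ together with closedness of $N(\A)^{\bot}$ gives $x\in D(\cA)$, $\cA x=y$, so $R(\A)=R(\cA)$ is closed. For \textbf{(ii)}$\Rightarrow$\textbf{(iii)}, if $R(\A)$ is closed then $\cA\colon D(\cA)\to R(\A)$ is a continuous bijection between Banach spaces ($D(\cA)$ complete in the graph norm since $\A$ is closed, $R(\A)$ complete as a closed subspace), and the open mapping theorem yields boundedness of $\cA^{-1}$.

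Since $\A^{**}=\A$, the pair $(\A^{*},\A)$ is again a dual pair whose reduced operator is $\cA^{*}$, so the very same three arguments applied to $\A^{*}$ prove \textbf{(i$^{*}$)}$\Leftrightarrow$\textbf{(ii$^{*}$)}$\Leftrightarrow$\textbf{(iii$^{*}$)}. The two chains are linked by the closed range theorem recalled in the excerpt, namely \textbf{(ii)}$\Leftrightarrow$\textbf{(ii$^{*}$)}, which closes the ring and makes all six assertions equivalent.

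For the sufficiency of compactness I would argue by contradiction in the spirit of Rellich: if $D(\cA)\cpt\H_{0}$ is compact but \textbf{(i)} fails, choose $x_{n}\in D(\cA)$ with $\norm{x_{n}}_{\H_{0}}=1$ and $\norm{\A x_{n}}_{\H_{1}}\to0$; the sequence is graph-bounded, so a subsequence converges in $\H_{0}$ to some $x$, closedness of $\A$ forces $x\in N(\A)$, while $x_{n}\in N(\A)^{\bot}$ gives $x\in N(\A)^{\bot}$, hence $x=0$, contradicting $\norm{x}_{\H_{0}}=1$.

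Assertion \textbf{(iv)} is the main obstacle. Assuming $D(\cA)\cpt\H_{0}$ compact, the previous step yields \textbf{(i)} and thus closed ranges, so $\cA^{-1}$ and $(\cA^{*})^{-1}$ are bounded bijections onto $D(\cA)$ resp.\ $D(\cA^{*})$. Extending $\cA^{-1}$ by zero on $N(\A^{*})$ gives a bounded operator $S\colon\H_{1}\to\H_{0}$, and using the defining duality $\scp{\A x}{w}_{\H_{1}}=\scp{x}{\A^{*}w}_{\H_{0}}$ I would check that its Hilbert space adjoint is $S^{*}=(\cA^{*})^{-1}$ extended by zero on $N(\A)$. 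Compactness of $D(\cA)\cpt\H_{0}$ is equivalent to compactness of $S$ (as $\cA^{-1}$ is a topological isomorphism onto $D(\cA)$), so Schauder's theorem transfers it to $S^{*}$, i.e.\ to $D(\cA^{*})\cpt\H_{1}$; the converse is symmetric. The delicate point is exactly the identification $S^{*}=(\cA^{*})^{-1}$, which needs the closed-range property (guaranteed here by compactness) so that $S$ and $S^{*}$ are everywhere-defined bounded operators to which Schauder applies.
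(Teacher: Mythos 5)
Your proof is correct, but note that the paper itself does not prove Lemma \ref{lem:fatoolbox1} at all: it is quoted verbatim from the FA-ToolBox literature (\cite[Lemma 4.1, Remark 4.2]{P2019b}, \cite[Lemma 2.1, Lemma 2.2]{P2020a}), so the comparison is with the standard proofs given there. Your treatment of the ring \textbf{(i)}--\textbf{(iii$^{*}$)} coincides with the usual one (graph-norm completeness of $D(\cA)$, open mapping theorem, closed range theorem to link the two chains, duality via $\A^{**}=\A$), and your compactness-implies-\textbf{(i)} contradiction argument is the standard Rellich-type one. Where you genuinely diverge is \textbf{(iv)}: the FA-ToolBox proof is a direct sequence argument -- given $(y_{n})$ bounded in $D(\cA^{*})\subset N(\A^{*})^{\bot}=R(\A)$, write $y_{n}=\A x_{n}$ with $x_{n}=\cA^{-1}y_{n}$ bounded in the graph norm, extract $x_{n}\to x$ in $\H_{0}$, and conclude via
\begin{equation*}
\norm{y_{n}-y_{m}}_{\H_{1}}^{2}
=\bscp{x_{n}-x_{m}}{\A^{*}(y_{n}-y_{m})}_{\H_{0}}
\leq c\,\norm{x_{n}-x_{m}}_{\H_{0}}
\end{equation*}
that $(y_{n})$ is Cauchy -- whereas you extend $\cA^{-1}$ by zero along $\H_{1}=R(\A)\oplus_{\H_{1}}N(\A^{*})$ to a bounded $S:\H_{1}\to\H_{0}$, identify $S^{*}$ as $(\cA^{*})^{-1}$ extended by zero on $N(\A)$ (your computation $\scp{Sy}{z}_{\H_{0}}=\scp{\A x}{w}_{\H_{1}}=\scp{y}{S^{*}z}_{\H_{1}}$ with $x=\cA^{-1}y_{R}$, $w=(\cA^{*})^{-1}z_{R}$ does check out), and invoke Schauder's theorem. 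Both routes are legitimate; the direct argument is more elementary and avoids the orthogonal decompositions and the adjoint identification, while your Schauder route makes the structural reason for the symmetry transparent and packages the embedding as compactness of a single globally defined operator. You were also right to flag that the identification $S^{*}=(\cA^{*})^{-1}\oplus 0$ rests on the closed-range property, which in \textbf{(iv)} is supplied by the compactness hypothesis itself -- that is exactly the point where a careless version of this argument would be circular.
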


\begin{rem}[fundamental FA-ToolBox lemma 1]
\label{rem:fatoolbox1}
If the estimate in (i) holds with $\ca$ then (ii) holds with $c_{\A^{*}}=\ca$, and vice versa. 
For the best constants we have
$$\bnorm{\cA^{-1}}_{R(\A)\to\H_{0}}
=\ca
=c_{\A^{*}}
=\bnorm{(\cA^{*})^{-1}}_{R(\A^{*})\to\H_{1}}.$$
Lemma \ref{lem:fatoolbox1} shows that the key point 
to a proper solution theory in the sense of Hadamard
is a close range (ii) or, equivalently, a Friedrichs/Poincar\'e type estimate (i).
\end{rem}

\begin{lem}[automatic regularity]
\label{lem:fatoolboxreg}
$\A^{*}\A$, $\A\A^{*}$ and $\cA^{*}\cA$, $\cA\cA^{*}$
are selfadjoint and nonnegative.
$\cA^{*}\cA=\A^{*}\cA$ and $\cA\cA^{*}=\A\cA^{*}$ are positive (and hence injective).
Moreover, the automatic regularity \eqref{eq:helmrange} extends to
\begin{align*}
\ldots=R(\A\A^{*}\A\dots)=R(\A\A^{*})=R(\A)&=R(\cA)=R(\cA\cA^{*})=R(\cA\cA^{*}\cA\dots)=\ldots,\\
\ldots=R(\A^{*}\A\A^{*}\dots)=R(\A^{*}\A)=R(\A^{*})&=R(\cA^{*})=R(\cA^{*}\cA)=R(\cA^{*}\cA\cA^{*}\dots)=\ldots
\end{align*}
and it holds
$$N(\A)=N(\A^{*}\A)=N(\A\A^{*}\A)=\ldots,\qquad
N(\A^{*})=N(\A\A^{*})=N(\A^{*}\A\A^{*})=\ldots.$$
\end{lem}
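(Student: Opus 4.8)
The plan is to prove the chain of range identities and kernel identities by systematically reducing everything to the single-operator facts already established for the dual pair $(\A,\A^{*})$ in \eqref{eq:helmrange}, combined with the observation that $\cA\colon D(\cA)\to\overline{R(\A)}$ is a bijection onto $\overline{R(\A)}=N(\A^{*})^{\bot}$ (and dually for $\cA^{*}$).

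First I would dispose of the selfadjointness and nonnegativity claims. For any densely defined closed $\A$ it is a standard theorem of von Neumann that $\A^{*}\A$ (defined on $D(\A^{*}\A)=\{x\in D(\A):\A x\in D(\A^{*})\}$) is selfadjoint, and nonnegativity is immediate from $\scp{\A^{*}\A x}{x}_{\H_{0}}=\norm{\A x}_{\H_{1}}^{2}\geq0$. The same applies verbatim to $\A\A^{*}$, $\cA^{*}\cA$, $\cA\cA^{*}$, since $(\cA,\cA^{*})$ is itself a dual pair of densely defined closed operators. For the positivity of $\cA^{*}\cA$, note $\scp{\cA^{*}\cA x}{x}=\norm{\cA x}_{\H_{1}}^{2}$, which vanishes only if $\cA x=0$; but $\cA$ is injective by construction (it is the restriction of $\A$ to $N(\A)^{\bot}$), so $x=0$, giving injectivity. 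The identity $\cA^{*}\cA=\A^{*}\cA$ I would read off from the fact that $\cA x\in\overline{R(\A)}=N(\A^{*})^{\bot}$ for every $x\in D(\cA)$, so that applying $\A^{*}$ to $\cA x$ is the same as applying its injective restriction $\cA^{*}$; the dual statement is symmetric.

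The heart of the matter is the two chains of range equalities. The anchor points $R(\A)=R(\cA)$ and $R(\A^{*})=R(\cA^{*})$ are exactly \eqref{eq:helmrange}. To extend leftward and rightward I would argue that $R(\A\A^{*})=R(\A)$: the inclusion $\subseteq$ is trivial, and for $\supseteq$ I take $y\in R(\A)=R(\cA)$, write $y=\cA x$ with $x\in D(\cA)\subseteq N(\A)^{\bot}=\overline{R(\A^{*})}=\overline{R(\cA^{*})}$, and use that $x$ can be approximated — or, since we may assume closed range by Lemma \ref{lem:fatoolbox1} in the intended application, exactly hit — by an element of $R(\cA^{*})=R(\A^{*})$, say $x=\A^{*}z$, whence $y=\A\A^{*}z\in R(\A\A^{*})$. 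Iterating this bootstrapping argument alternately with $\A$ and $\A^{*}$ produces the full doubly-infinite chains $R(\A\A^{*}\A\cdots)=R(\A)$ and $R(\A^{*}\A\A^{*}\cdots)=R(\A^{*})$, and the same computation with calligraphic operators yields the $\cA$-side equalities; they coincide because $R(\A)=R(\cA)$.

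Finally the kernel identities follow the same template but are cleaner. For $N(\A)=N(\A^{*}\A)$ the inclusion $\subseteq$ is immediate; conversely $x\in N(\A^{*}\A)$ gives $0=\scp{\A^{*}\A x}{x}_{\H_{0}}=\norm{\A x}_{\H_{1}}^{2}$, hence $x\in N(\A)$. Iterating yields $N(\A)=N(\A^{*}\A)=N(\A\A^{*}\A)=\cdots$, and dually for $\A^{*}$. \emph{The main obstacle} I anticipate is the $\supseteq$ direction of the range-chain equalities without assuming closed range: there the intermediate step $x=\A^{*}z$ only holds up to closure, so strictly one must either invoke closedness of the ranges (legitimate in this paper's setting via Lemma \ref{lem:fatoolbox1}) or argue with closures throughout and then note that all the ranges in the chain share the single closed subspace $\overline{R(\A)}=N(\A^{*})^{\bot}$. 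I would make this hypothesis explicit so that every equality sign in the displayed chains is justified rather than merely an equality of closures.
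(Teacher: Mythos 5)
The paper never proves this lemma --- it is stated as part of the cited FA-ToolBox (cf.\ \cite{P2019b,P2020a}) and no proof environment follows it --- so your argument can only be judged on its own merits, and on those it is correct and follows the standard route: von Neumann's theorem for selfadjointness, $\scp{\A^{*}\A x}{x}_{\H_{0}}=\norm{\A x}_{\H_{1}}^{2}$ for nonnegativity and the kernel identities, the observation $R(\cA)\subset\overline{R(\A)}=N(\A^{*})^{\bot}$ to get $\cA^{*}\cA=\A^{*}\cA$ together with injectivity of $\cA$ for positivity, and bootstrapping through $N(\A)^{\bot}=R(\A^{*})=R(\cA^{*})$ for the range chains. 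Your flagged ``main obstacle'' deserves emphasis, because it is not a pedantic worry but a genuine defect of the lemma as displayed: without closed range the equality $R(\A\A^{*})=R(\A)$ is simply false. Take $\H_{0}=\H_{1}=\ell^{2}$ and $\A=\A^{*}$ the diagonal operator $(x_{n})\mapsto(x_{n}/n)$; then $y=(n^{-2})_{n}=\A(n^{-1})_{n}\in R(\A)$, but $y\in R(\A\A^{*})$ would force $(n^{2}y_{n})_{n}=(1)_{n}\in\ell^{2}$, a contradiction. Only the kernel identities and the closure versions $\overline{R(\A\A^{*})}=N(\A\A^{*})^{\bot}=N(\A^{*})^{\bot}=\overline{R(\A)}$ hold unconditionally, exactly as you say, so making the closed-range hypothesis explicit is the right repair; in the paper's applications it is always available since each instance comes with a Friedrichs/Poincar\'e type estimate, which is equivalent to closedness of the range by Lemma \ref{lem:fatoolbox1}. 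Two small points of hygiene: Lemma \ref{lem:fatoolbox1} does not itself \emph{grant} closed range (the estimates do), so phrase the appeal accordingly; and for the longer kernel chains the ``iteration'' is cleanest via orthogonality, e.g.\ $\A\A^{*}\A x=0$ puts $\A^{*}\A x\in N(\A)\cap R(\A^{*})\subset N(\A)\cap N(\A)^{\bot}=\{0\}$, reducing to the previous step.
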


\begin{lem}[fundamental FA-ToolBox lemma 2]
\label{lem:fatoolbox2}
Let $R(\A)$ be closed.

\begin{itemize}
\item[\bf(i)]
$\cA^{*}\cA:D(\cA^{*}\cA)\subset N(\A)^{\bot}\to R(\A^{*})=N(\A)^{\bot}$
is bijective with bounded inverse
$$(\cA^{*}\cA)^{-1}=\cA^{-1}(\cA^{*})^{-1}:R(\A^{*})\to D(\cA^{*}\cA),\qquad
\bnorm{(\cA^{*}\cA)^{-1}}_{R(\A^{*})\to\H_{0}}=\ca^{2}.$$
For $x\in D(\cA^{*}\cA)$ it holds
\begin{align}
\label{eq:estAA}
\norm{x}_{\H_{0}}\leq\ca\norm{\A x}_{\H_{1}}\leq\ca^{2}\norm{\A^{*}\A x}_{\H_{0}}.
\end{align}
Moreover, $\cA^{*}\cA=\A^{*}\cA$.
\item[\bf(i$^{*}$)]
Interchanging $\A$ and $\A^{*}$ we get a similar results for $\cA\cA^{*}$.
\end{itemize}

Solving $\cA^{*}\cA x=f$:

\begin{itemize}
\item[\bf(ii)]
For $f\in R(\A^{*})$ the unique solution $x:=(\cA^{*}\cA)^{-1}f\in D(\cA^{*}\cA)$ 
of $\cA^{*}\cA x=f$ can be found by the variational formulation
\begin{align*}
\forall\,\phi\in D(\cA)\qquad
\scp{\A x}{\A\phi}_{\H_{1}}
=\scp{f}{\phi}_{\H_{0}},
\end{align*}
which holds also for all $\phi\in D(\A)$ as
\begin{align}
\label{eq:varform2}
\scp{\A x}{\A\phi}_{\H_{1}}
=\scp{\A x}{\A\pi_{\A^{*}}\phi}_{\H_{1}}
=\scp{f}{\pi_{\A^{*}}\phi}_{\H_{0}},
=\scp{\pi_{\A^{*}}f}{\phi}_{\H_{0}},
=\scp{f}{\phi}_{\H_{0}},
\end{align}
where $\pi_{\A^{*}}$ denotes the orthogonal projector onto $N(\A)^{\bot}=R(\A^{*})$,
which implies also $\pi_{\A^{*}}:D(\A)\to D(\cA)$.
\item[\bf(ii$^{*}$)]
Interchanging $\A$ and $\A^{*}$ we get a similar variational formulation for $\cA\cA^{*}y=g$.
\end{itemize}

Solving $\cA x=g$:

\begin{itemize}
\item[\bf(iii)]
For $g\in R(\A)$ the unique solution $x:=\cA^{-1}g\in D(\cA)$ 
of $\cA x=g$ can be found by the variational formulation
\begin{align*}
\forall\,\phi\in D(\cA)\qquad
\scp{\A x}{\A\phi}_{\H_{1}}
=\scp{g}{\A\phi}_{\H_{1}},
\end{align*}
which holds also for all $\phi\in D(\A)$ since $R(\cA)=R(\A)$.
Note that $\A x-g\in R(\A)$ belongs to $N(\cA^{*})=\{0\}$.

Another way is to use a potential $y$ with $\A^{*}y=x$, i.e.,
to compute the unique potential 
$y:=(\cA^{*})^{-1}x=(\cA^{*})^{-1}\cA^{-1}g\in D(\cA\cA^{*})$
which satisfies $\cA\cA^{*}y=g$. By (ii) we can find $y\in D(\cA^{*})$
by the variational formation 
\begin{align*}
\forall\,\psi\in D(\A^{*})\qquad
\scp{\A^{*}y}{\A^{*}\psi}_{\H_{0}}
=\scp{g}{\psi}_{\H_{1}}.
\end{align*}
\item[\bf(iii$^{*}$)]
Interchanging $\A$ and $\A^{*}$ we get similar variational formulations
for $\cA^{*}y=f$.

\item[\bf(iv)]
Solving the latter variational formulations leads to saddle point problems 
which are tricky to handle.
A comprehensive theory can be found in \cite{P2020a}.
\end{itemize}
\end{lem}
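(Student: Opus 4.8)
\emph{Proof proposal.} The plan is to build everything on Lemma~\ref{lem:fatoolbox1} together with the Helmholtz decompositions \eqref{eq:helm1}, \eqref{eq:helmrange}. Since $R(\A)$ is closed, Lemma~\ref{lem:fatoolbox1} already gives the bounded reduced inverses $\cA^{-1}:R(\A)\to D(\cA)$ and $(\cA^{*})^{-1}:R(\A^{*})\to D(\cA^{*})$, both of operator norm $\ca=c_{\A^{*}}$ by Remark~\ref{rem:fatoolbox1}; in particular $\cA$ and $\cA^{*}$ are bijections onto their closed ranges. For \textbf{(i)} I would first check that the composition $\cA^{-1}(\cA^{*})^{-1}$ is well defined on $R(\A^{*})$: for $f\in R(\A^{*})$ the element $(\cA^{*})^{-1}f$ lies in $D(\cA^{*})\subset N(\A^{*})^{\bot}=\overline{R(\A)}=R(\A)$, which is exactly the domain of $\cA^{-1}$. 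Setting $x:=\cA^{-1}(\cA^{*})^{-1}f$ then gives $\cA x=(\cA^{*})^{-1}f\in D(\cA^{*})$, so $x\in D(\cA^{*}\cA)$ and $\cA^{*}\cA x=\cA^{*}(\cA^{*})^{-1}f=f$; conversely $\cA^{*}\cA x=0$ forces $\cA x\in N(\cA^{*})=\{0\}$ and hence $x\in N(\cA)=\{0\}$. Thus $\cA^{*}\cA$ is a bijection onto $R(\A^{*})=N(\A)^{\bot}$ with inverse $\cA^{-1}(\cA^{*})^{-1}$, bounded by $\ca\cdot c_{\A^{*}}=\ca^{2}$; equality of the norm follows since $\cA^{*}\cA$ is selfadjoint and nonnegative, whence $\norm{(\cA^{*}\cA)^{-1}}=\norm{\cA^{-1}}^{2}=\ca^{2}$. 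The identity $\cA^{*}\cA=\A^{*}\cA$ is already contained in Lemma~\ref{lem:fatoolboxreg}.

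For the chained estimate \eqref{eq:estAA} I would simply stack the two Friedrichs/Poincar\'e inequalities of Lemma~\ref{lem:fatoolbox1}: for $x\in D(\cA^{*}\cA)\subset D(\cA)$ assertion (i) gives $\norm{x}_{\H_{0}}\leq\ca\norm{\A x}_{\H_{1}}$, and since $\cA x\in D(\cA^{*})$ with $\cA^{*}\cA x=\A^{*}\A x$, assertion (i$^{*}$) applied to $\cA x$ yields $\norm{\A x}_{\H_{1}}=\norm{\cA x}_{\H_{1}}\leq c_{\A^{*}}\norm{\A^{*}\A x}_{\H_{0}}=\ca\norm{\A^{*}\A x}_{\H_{0}}$. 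Part \textbf{(i$^{*}$)} follows verbatim after interchanging $\A$ and $\A^{*}$, which is legitimate because $\A^{**}=\A$ makes $(\A,\A^{*})$ a symmetric dual pair.

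The variational characterisations \textbf{(ii)}, \textbf{(iii)} I would obtain from the adjoint relation together with \eqref{eq:helm1}. For \textbf{(ii)}, existence of the solution is part (i); for $x\in D(\cA^{*}\cA)$ one has $\A x\in D(\A^{*})$, so for every $\phi\in D(\cA)\subset D(\A)$
\[
\scp{\A x}{\A\phi}_{\H_{1}}=\scp{\A^{*}\A x}{\phi}_{\H_{0}}=\scp{\cA^{*}\cA x}{\phi}_{\H_{0}}=\scp{f}{\phi}_{\H_{0}},
\]
and the passage to all $\phi\in D(\A)$ is exactly the computation \eqref{eq:varform2}, using that the orthogonal projector $\pi_{\A^{*}}$ onto $N(\A)^{\bot}=R(\A^{*})$ satisfies $\A\phi=\A\pi_{\A^{*}}\phi$ and maps $D(\A)$ into $D(\cA)$ by \eqref{eq:helmrange}, while $\pi_{\A^{*}}f=f$ because $f\in R(\A^{*})$. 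Uniqueness is immediate: testing with $\phi=x$ and using assertion (i), any solution of the homogeneous variational problem satisfies $\A x=0$, hence $x\in N(\A)\cap N(\A)^{\bot}=\{0\}$. For \textbf{(iii)} the identity $\scp{\A x}{\A\phi}=\scp{g}{\A\phi}$ says $\A x-g\perp R(\cA)=R(\A)$; since also $\A x-g\in R(\A)$, the decomposition \eqref{eq:helm1} forces $\A x=g$, i.e.\ $\cA x=g$. The potential route instead solves $\cA\cA^{*}y=g$ by part (ii$^{*}$) and sets $x:=\cA^{*}y$, with (ii$^{*}$), (iii$^{*}$) following by the same symmetry as in part (i$^{*}$); part \textbf{(iv)} is only a pointer to \cite{P2020a}.

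The computations are essentially bookkeeping once Lemma~\ref{lem:fatoolbox1} is available. The only points demanding genuine care are the domain identifications in the composition $\cA^{*}\cA=\A^{*}\cA$, so that $\cA x$ actually lands in $D(\cA^{*})=D(\A^{*})\cap N(\A^{*})^{\bot}$ and the reduced and full operators may be interchanged freely, and the verification that the projector respects the domains, i.e.\ $\pi_{\A^{*}}:D(\A)\to D(\cA)$, which is precisely where \eqref{eq:helmrange} is indispensable.
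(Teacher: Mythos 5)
Your proposal is correct and takes essentially the route the paper intends: the paper states this lemma without proof as part of the recalled FA-ToolBox (citing \cite{P2019b,P2020a,PV2020a}), and your argument is precisely the standard one, building the bijectivity and the inverse $\cA^{-1}(\cA^{*})^{-1}$ on Lemma \ref{lem:fatoolbox1}, the decompositions \eqref{eq:helm1} and \eqref{eq:helmrange}, and Lemma \ref{lem:fatoolboxreg}, with the correct domain bookkeeping $D(\cA^{*})\subset N(\A^{*})^{\bot}=R(\A)$. The only tersely justified step is the norm \emph{equality} $\bnorm{(\cA^{*}\cA)^{-1}}_{R(\A^{*})\to\H_{0}}=\ca^{2}$ (composition of the two inverses alone yields only $\leq\ca^{2}$), but your appeal to selfadjointness does close it, since $\min\sigma(\cA^{*}\cA)=\ca^{-2}$ by optimality of $\ca$ in Lemma \ref{lem:fatoolbox1}(i), or equivalently via the C$^{*}$-identity with $(\cA^{-1})^{*}=(\cA^{*})^{-1}$.
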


\subsubsection{Helmholtz Projections}
\label{sec:tinyfatoolbox1add}

The projections in \eqref{eq:helm1} and \eqref{eq:helmrange} can be computed as follows:

Let $R(\A)$ be closed and let us consider, e.g., $\H_{1}=R(\A)\oplus_{\H_{1}}N(\A^{*})$.

For $g\in\H_{1}$ the variational formulation in Lemma \ref{lem:fatoolbox2}(iii)
computes the orthogonal projections 
$\pi_{\A}g$ onto $R(\A)$ and
$(1-\pi_{\A})g$ onto $N(\A^{*})=R(\A)^{\bot}$.
More precisely, $x\in D(\A)$ such that 
\begin{align}
\label{eq:projbilin}
\forall\,\phi\in D(\A)\qquad
\scp{\A x}{\A\phi}_{\H_{1}}
=\scp{g}{\A\phi}_{\H_{1}}
\end{align}
implies $\A x-g\in N(\A^{*})$ and thus we get \eqref{eq:helm1}, i.e.,
with $\pi_{\A}g=\A x$
$$g=\A x-\A x+g
=\pi_{\A}g+(1-\pi_{\A})g\in R(\A)\oplus_{\H_{1}}N(\A^{*}).$$
Note that $\A x-g\in N(\cA^{*})=\{0\}$ if and only if $g\in R(\A)$.

Now let us consider the bounded linear operator 
$\A:D(\A)\to\H_{1}$ and its Banach space adjoint $\A'$.
Using the Riesz isometry $\calR_{\H_{1}}:\H_{1}\to\H_{1}'$ we introduce the modified adjoint
$$\A^{\top}:=\A'\calR_{\H_{1}}:\H_{1}\to D(\A)';\qquad
y\mapsto\A'\calR_{\H_{1}}y(\,\cdot\,)=\calR_{\H_{1}}y(\A\cdot\,)=\scp{\A\cdot\,}{y}_{\H_{1}}.$$ 
Then $\A'$ and $\A^{\top}$ are bounded linear operators with
$\norm{\A^{\top}}=\norm{\A'}=\norm{\A}$
and $\A^{\top}$ is an extension of $\A^{*}$.
Moreover, $N(\A^{\top})=R(\A)^{\bot}=N(\A^{*})$
and $R(\A^{\top})=R(\A')$ is closed by the closed range theorem.
Therefore, $\A^{\top}|_{R(\A)}$ is boundedly invertible on $R(\A')$
by the bounded inverse theorem, i.e.,
$$\cA^{\top}:=\A^{\top}|_{R(\A)}:R(\A)\to R(\A');\qquad
y\mapsto\A^{\top}y$$
is a topological isomorphism.

\eqref{eq:projbilin} translates equivalently to 
$$\A^{\top}g=\A^{\top}\A x=\cA^{\top}\A x.$$
Hence
$$\pi_{\A}g
=\A x
=(\cA^{\top})^{-1}\A^{\top}g
=\A\cA^{-1}(\cA^{\top})^{-1}\A^{\top}g
=\A(\cA^{\top}\cA)^{-1}\A^{\top}g.$$
Note that we have indeed 
$\pi_{\A}^{2}
=\A(\cA^{\top}\cA)^{-1}\A^{\top}\A(\cA^{\top}\cA)^{-1}\A^{\top}
=\A(\cA^{\top}\cA)^{-1}\A^{\top}
=\pi_{\A}$
and that for $g\in R(\A)$ it holds
$\pi_{\A}g=(\cA^{\top})^{-1}\A^{\top}g=(\cA^{\top})^{-1}\cA^{\top}g=g$.
Finally, 
$$\pi_{\A}=\A(\cA^{\top}\cA)^{-1}\A^{\top}
=(\cA^{\top})^{-1}\A^{\top}:\H_{1}\to R(\A),$$
where $\A^{\dagger}:=(\cA^{\top}\cA)^{-1}\A^{\top}$ is often called Moore/Penrose inverse of $\A$.
Note that $\A^{\dagger}$ is a bounded right inverse of $\A$, i.e.,
$$\A\A^{\dagger}
=(\cA^{\top})^{-1}\A^{\top},\qquad
\A\A^{\dagger}\big|_{R(\A)}
=(\cA^{\top})^{-1}\cA^{\top}
=\id_{R(\A)}.$$

\subsubsection{Operator Complexes}
\label{sec:tinyfatoolbox2}

Let $\H_{2}$ be another Hilbert space and let
\begin{equation}
\label{complex:A01}
\def\arrowlength{6ex}
\def\arrowdistance{.8}
\begin{tikzcd}[column sep=\arrowlength]
\cdots 
\arrow[r, rightarrow, shift left=\arrowdistance, "\cdots"] 
\arrow[r, leftarrow, shift right=\arrowdistance, "\cdots"']
& 
\H_{0} 
\ar[r, rightarrow, shift left=\arrowdistance, "\A_{0}"] 
\ar[r, leftarrow, shift right=\arrowdistance, "\A_{0}^{*}"']
& 
\H_{1}
\arrow[r, rightarrow, shift left=\arrowdistance, "\A_{1}"] 
\arrow[r, leftarrow, shift right=\arrowdistance, "\A_{1}^{*}"']
& 
\H_{2}
\arrow[r, rightarrow, shift left=\arrowdistance, "\cdots"] 
\arrow[r, leftarrow, shift right=\arrowdistance, "\cdots"']
&
\cdots 
\end{tikzcd}
\end{equation}
be a primal and dual Hilbert complex, i.e.,
\begin{align*}
\A_{0}:D(\A_{0})\subset\H_{0}&\to\H_{1},
&
\A_{1}:D(\A_{1})\subset\H_{1}&\to\H_{2},\\
\A_{0}^{*}:D(\A_{0}^{*})\subset\H_{1}&\to\H_{0},
&
\A_{1}^{*}:D(\A_{1}^{*})\subset\H_{2}&\to\H_{1}
\end{align*}
are densely defined and closed linear operators
satisfying the complex property
\begin{align}
\label{eq:compprop}
\A_{1}\A_{0}\subset0.
\end{align}
Note that \eqref{eq:compprop} is equivalent to $R(\A_{0})\subset N(\A_{1})$
which is equivalent to $R(\A_{1}^{*})\subset N(\A_{0}^{*})$
(dual complex property) as 
$R(\A_{1}^{*})
\subset\overline{R(\A_{1}^{*})}
=N(\A_{1})^{\bot_{\H_{1}}}
\subset R(\A_{0})^{\bot_{\H_{1}}}
=N(\A_{0}^{*})$
and vice versa. 

Defining the cohomology group 
$$N_{0,1}:=N(\A_{1})\cap N(\A_{0}^{*})$$
we get the following orthogonal Helmholtz-type decompositions,
cf.~\eqref{eq:helm1}.

\begin{lem}[Helmholtz decomposition]
\label{lem:toolboxhelm1}
The orthogonal Helmholtz-type decompositions
\begin{align}
\label{eq:helm2}
\begin{aligned}
\H_{1}&=\overline{R(\A_{0})}\oplus_{\H_{1}}N(\A_{0}^{*}),
&
\H_{1}&=N(\A_{1})\oplus_{\H_{1}}\overline{R(\A_{1}^{*})},\\
N(\A_{1})&=\overline{R(\A_{0})}\oplus_{\H_{1}}N_{0,1},
&
N(\A_{0}^{*})&=N_{0,1}\oplus_{\H_{1}}\overline{R(\A_{1}^{*})},\\
D(\A_{1})&=\overline{R(\A_{0})}\oplus_{\H_{1}}\big(D(\A_{1})\cap N(\A_{0}^{*})\big),
&
D(\A_{0}^{*})&=\big(N(\A_{1})\cap D(\A_{0}^{*})\big)\oplus_{\H_{1}}\overline{R(\A_{1}^{*})},\\
D(\A_{0}^{*})&=D(\cA_{0}^{*})\oplus_{\H_{1}}N(\A_{0}^{*}),
&
D(\A_{1})&=N(\A_{1})\oplus_{\H_{1}}D(\cA_{1}),
\end{aligned}
\end{align}
as well as $R(\cA_{0}^{*})=R(\A_{0}^{*})$ 
and $R(\cA_{1})=R(\A_{1})$ hold. Moreover,
\begin{align*}
\H_{1}
&=\overline{R(\A_{0})}\oplus_{\H_{1}}N_{0,1}\oplus_{\H_{1}}\overline{R(\A_{1}^{*})},\\
D(\A_{0}^{*})
&=D(\cA_{0}^{*})\oplus_{\H_{1}}N_{0,1}\oplus_{\H_{1}}\overline{R(\A_{1}^{*})},\\
D(\A_{1})
&=\overline{R(\A_{0})}\oplus_{\H_{1}}N_{0,1}\oplus_{\H_{1}}D(\cA_{1}),\\
D(\A_{1})\cap D(\A_{0}^{*})
&=D(\cA_{0}^{*})\oplus_{\H_{1}}N_{0,1}\oplus_{\H_{1}}D(\cA_{1}).
\end{align*}
\end{lem}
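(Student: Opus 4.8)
The plan is to derive all decompositions from the two single–operator Helmholtz decompositions \eqref{eq:helm1}, applied once to $\A_0:\H_0\to\H_1$ and once to $\A_1:\H_1\to\H_2$, and then to refine them using the complex property \eqref{eq:compprop} in its primal form $R(\A_0)\subset N(\A_1)$ and its dual form $R(\A_1^{*})\subset N(\A_0^{*})$. The two decompositions of the top row are immediate: \eqref{eq:helm1} for $\A_0$ gives $\H_1=\overline{R(\A_0)}\oplus N(\A_0^{*})$, while \eqref{eq:helm1} for $\A_1$ gives $\H_1=N(\A_1)\oplus\overline{R(\A_1^{*})}$. Likewise the range identities $R(\cA_0^{*})=R(\A_0^{*})$, $R(\cA_1)=R(\A_1)$ and the bottom row $D(\A_0^{*})=D(\cA_0^{*})\oplus N(\A_0^{*})$, $D(\A_1)=N(\A_1)\oplus D(\cA_1)$ are exactly \eqref{eq:helmrange} for the respective operators.

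For the middle row I would intersect the first top decomposition with the closed subspace $N(\A_1)$. Since $\overline{R(\A_0)}\subset N(\A_1)$ by \eqref{eq:compprop} and closedness of $N(\A_1)$, the orthogonal complement of $\overline{R(\A_0)}$ inside $N(\A_1)$ is $N(\A_1)\cap N(\A_0^{*})=N_{0,1}$, whence $N(\A_1)=\overline{R(\A_0)}\oplus N_{0,1}$. Dually, intersecting the second top decomposition with $N(\A_0^{*})$ and using $\overline{R(\A_1^{*})}\subset N(\A_0^{*})$ yields $N(\A_0^{*})=N_{0,1}\oplus\overline{R(\A_1^{*})}$.

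The domain splittings $D(\A_1)=\overline{R(\A_0)}\oplus\big(D(\A_1)\cap N(\A_0^{*})\big)$ and $D(\A_0^{*})=\big(N(\A_1)\cap D(\A_0^{*})\big)\oplus\overline{R(\A_1^{*})}$ follow by restricting the top decompositions to the respective domains; the decisive observation is that the fixed summand already lies in the relevant domain, namely $\overline{R(\A_0)}\subset N(\A_1)\subset D(\A_1)$ and $\overline{R(\A_1^{*})}\subset N(\A_0^{*})\subset D(\A_0^{*})$ by closedness, so the complementary component of any domain element stays in the domain. The three triple decompositions of $\H_1$, $D(\A_0^{*})$ and $D(\A_1)$ are then obtained by substituting the middle–row refinements into the corresponding top and bottom decompositions; mutual orthogonality is inherited because $N_{0,1}$ and $\overline{R(\A_1^{*})}$ both sit inside $N(\A_0^{*})\perp\overline{R(\A_0)}$, while $N_{0,1}\perp\overline{R(\A_1^{*})}$ comes from the middle row.

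The main obstacle, and the only step needing genuine care, is the last identity $D(\A_1)\cap D(\A_0^{*})=D(\cA_0^{*})\oplus N_{0,1}\oplus D(\cA_1)$. For the inclusion ``$\subset$'' I would take $x\in D(\A_1)\cap D(\A_0^{*})$ and write $x=x_0+x_h+x_1$ along the $\H_1$–triple with $x_0\in\overline{R(\A_0)}$, $x_h\in N_{0,1}$, $x_1\in\overline{R(\A_1^{*})}$. Since $x_0+x_h\in N(\A_1)\subset D(\A_1)$, closedness gives $x_1=x-(x_0+x_h)\in D(\A_1)\cap\overline{R(\A_1^{*})}=D(\cA_1)$; dually $x_h+x_1\in N(\A_0^{*})\subset D(\A_0^{*})$ forces $x_0\in D(\A_0^{*})\cap\overline{R(\A_0)}=D(\cA_0^{*})$. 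The reverse inclusion ``$\supset$'' is just the remark that each of $D(\cA_0^{*})$, $N_{0,1}$, $D(\cA_1)$ lies in both $D(\A_1)$ and $D(\A_0^{*})$, again by closedness of $N(\A_1)$ and $N(\A_0^{*})$. This completes the decomposition.
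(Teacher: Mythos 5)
Your proof is correct and follows exactly the route the paper intends: the lemma is stated there without a separate proof, as an immediate consequence of the projection-theorem decompositions \eqref{eq:helm1}, the identities \eqref{eq:helmrange}, and the complex property $R(\A_{0})\subset N(\A_{1})$ (dually $R(\A_{1}^{*})\subset N(\A_{0}^{*})$), which is precisely the argument you carry out. Your careful handling of the final intersection identity $D(\A_{1})\cap D(\A_{0}^{*})=D(\cA_{0}^{*})\oplus_{\H_{1}}N_{0,1}\oplus_{\H_{1}}D(\cA_{1})$ correctly fills in the only step the paper leaves implicit.
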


\subsection{Applications to the De Rham Complex}
\label{sec:appderham}

For Lebesgue and Sobolev spaces we use standard notations
$\Lt(\om)$, $\H^{k}(\om)$, $k\in\n$,
and $\H(\rot,\om)$, $\H(\div,\om)$, respectively,
and introduce homogeneous boundary conditions by
$$\Hc^{k}(\om):=\overline{\Cc^{\infty}(\om)}^{\H^{k}(\om)},\qquad
\Hc(\rot,\om):=\overline{\Cc^{\infty}(\om)}^{\H(\rot,\om)},\qquad
\Hc(\div,\om):=\overline{\Cc^{\infty}(\om)}^{\H(\div,\om)}.$$

\subsubsection{Domains}
\label{sec:domains}

\begin{defi}[admissible domains]
\label{def:admdom1}
$\om$ is called 
\begin{itemize}
\item[\bf(i)]
`Friedrichs admissible' if Friedrichs' estimate
$$\exists\,\cf>0\quad
\forall\,\varphi\in\Hc^{1}(\om)\qquad
\norm{\varphi}_{\Lt(\om)}\leq\cf\norm{\na\varphi}_{\Lt(\om)}$$
\item[\bf(ii)]
`Poincar\'e admissible' if $\om$ is bounded and Poincar\'e's estimate
$$\exists\,\cp>0\quad
\forall\,\varphi\in\hH^{1}(\om)\qquad
\norm{\varphi}_{\Lt(\om)}\leq\cp\norm{\na\varphi}_{\Lt(\om)}$$
\end{itemize}
holds. Note $\hH^{1}(\om)=\H^{1}(\om)\cap\reals^{\bot}$.
\end{defi}

From now on, let $\cf$ and $\cp$ denote the best possible constants in Definition \ref{def:admdom1},
cf.~\eqref{eq:introfpest1}.

\begin{rem}[admissible domains]
\label{rem:admdom1}
We note:
\begin{itemize}
\item[\bf(i)]
Any $\om$ being bounded in at least one direction with diameter $d>0$ is Friedrichs admissible.
Moreover, we emphasise that no regularity of $\ga$ is needed.
It holds
$$\frac{1}{\cf}
=\min_{0\neq u\in\Hc^{1}(\om)}
\frac{\norm{\na u}_{\Lt(\om)}}{\norm{u}_{\Lt(\om)}},$$
which means that $1/\cf$ is the square root of the first eigenvalue of the negative Dirichlet Laplacian.
\item[\bf(ii)]
Any bounded (weak) Lipschitz domain $\om$ with diameter $d>0$ is Poincar\'e admissible.
We have
$$\frac{1}{\cp}
=\min_{0\neq u\in\hH^{1}(\om)}
\frac{\norm{\na u}_{\Lt(\om)}}{\norm{u}_{\Lt(\om)}},$$
which means that $1/\cp$ is the square root of the first positive eigenvalue of the negative Neumann Laplacian.
\end{itemize}
In both cases we have $\cf,\cp\leq d/\pi$, cf.~\cite{PW1960a}.
\end{rem}

\subsubsection{De Rham Complex}
\label{sec:appderhamshort}

In the following we shall apply the latter abstract results with different choices of $\A$
for various operators from the de Rham complex
\begin{equation}
\label{complex:derham1}
\def\arrowlength{10ex}
\def\arrowdistance{.8}
\begin{tikzcd}[column sep=\arrowlength]
\Lt(\om) 
\ar[r, rightarrow, shift left=\arrowdistance, "\A_{0}=\nac"] 
\ar[r, leftarrow, shift right=\arrowdistance, "\A_{0}^{*}=-\div"']
&
[1em]
\Lt(\om) 
\ar[r, rightarrow, shift left=\arrowdistance, "\A_{1}=\rotc"] 
\ar[r, leftarrow, shift right=\arrowdistance, "\A_{1}^{*}=\rot"']
& 
[1em]
\Lt(\om) 
\arrow[r, rightarrow, shift left=\arrowdistance, "\A_{2}=\divc"] 
\arrow[r, leftarrow, shift right=\arrowdistance, "\A_{2}^{*}=-\na"']
& 
[1em]
\Lt(\om),
\end{tikzcd}
\end{equation}
cf.~\eqref{complex:A01}, with the densely defined and closed linear operators from vector calculus
\begin{align*}
\nac:\Hc^{1}(\om)\subset\Lt(\om)&\to\Lt(\om),
&
\na:\H^{1}(\om)\subset\Lt(\om)&\to\Lt(\om);
&
\phi&\mapsto\na\phi,\\
\rotc:\Hc(\rot,\om)\subset\Lt(\om)&\to\Lt(\om),
&
\rot:\H(\rot,\om)\subset\Lt(\om)&\to\Lt(\om);
&
\Phi&\mapsto\rot\Phi,\\
\divc:\Hc(\div,\om)\subset\Lt(\om)&\to\Lt(\om),
&
\div:\H(\div,\om)\subset\Lt(\om)&\to\Lt(\om);
&
\Phi&\mapsto\div\Phi.
\end{align*}

\subsubsection{Gradients and Divergences}
\label{sec:graddiv1}

Let us start with $\na$ and $\div$.

\begin{itemize}
\item
Let $\om$ be Friedrichs admissible.
We consider 
$$\A:=\A_{0}=\nac,\qquad
\A^{*}=-\div.$$
By Friedrichs' estimate and Lemma \ref{lem:fatoolbox1}
$R(\A)$ and $R(\A^{*})$ are closed.
Moreover, 
$$N(\nac)=N(\A)=\{0\},\qquad
R(\div)=R(\A^{*})=N(\A)^{\bot}=\Lt(\om)$$
and
\begin{align*}
\cA:\Hc^{1}(\om)\subset\Lt(\om)&\to R(\A),\\
\cA^{*}:\H(\div,\om)\cap R(\A)\subset R(\A)&\to\Lt(\om).
\end{align*}
Note that by \eqref{eq:helm2}
\begin{align*}
N(\A^{*})=N(\div)&=\big\{\Phi\in\H(\div,\om):\div\Phi=0\big\},\\
R(\A)=R(\nac)=N(\rotc)\cap\Harmd(\om)^{\bot}
&=\big\{\Phi\in\Hc(\rot,\om):\rot\Phi=0\,\wedge\,\Phi\,\bot\,\Harmd(\om)\big\},
\end{align*}
where we denote the harmonic Dirichlet fields (cohomology group) by
$$\Harmd(\om)
:=N(\rotc)\cap N(\div)
=\big\{\Phi\in\Hc(\rot,\om)\cap\H(\div,\om):\rot\Phi=0\,\wedge\,\div\Phi=0\big\}.$$

$\cA$ and $\cA^{*}$ are bijective with bounded inverses
\begin{align*}
\cA^{-1}:R(\A)&\to D(\cA),
&
\norm{\cA^{-1}}_{R(\A)\to\Lt(\om)}&=\cf,\\
(\cA^{*})^{-1}:R(\A^{*})&\to D(\cA^{*}),
&
\bnorm{(\cA^{*})^{-1}}_{R(\A^{*})\to\Lt(\om)}&=\cf.
\end{align*}

Let $f\in\Lt(\om)$ and $G\in R(\nac)$.
Then 
$$u:=\cA^{-1}G\in D(\cA)=\Hc^{1}(\om),\qquad
E:=(\cA^{*})^{-1}f\in D(\cA^{*})=\H(\div,\om)\cap R(\nac)$$
are the unique solutions of the boundary value problems
$\nac u=G$ and $-\div|_{R(\nac)}E=f$, i.e.,
\begin{align*}
\na u&=G,
&
-\div E&=f
&
&\text{in }\om,\\
&
&
\rot E&=0
&
&\text{in }\om,\\
u&=0,
&
n\times E&=0
&
&\text{on }\ga,\\
&
&
E&\;\bot\;\Harmd(\om).
\end{align*}

\item
Let $\om$ be Poincar\'e admissible.
We consider 
$$\A:=-\A_{2}^{*}=\na,\qquad
\A^{*}=-\divc.$$
Note that $N(\na)=N(\A)=\reals$ and $\cA=\A|_{\reals^{\bot}}$.
By Poincar\'e's estimate and Lemma \ref{lem:fatoolbox1}
$R(\cA)=R(\A)$ and $R(\A^{*})$ are closed.
Moreover,
$$R(\divc)
=R(\A^{*})
=N(\A)^{\bot}
=\Lt(\om)\cap\reals^{\bot}
=\hLt(\om)$$
and
\begin{align*}
\cA:\hH^{1}(\om)\subset\hLt(\om)&\to R(\A),
&\hH^{1}(\om)&=\H^{1}(\om)\cap\reals^{\bot}\\
\cA^{*}:\Hc(\div,\om)\cap R(\A)\subset R(\A)&\to\hLt(\om).
\end{align*}
By \eqref{eq:helm2}
\begin{align*}
N(\A^{*})=N(\divc)&=\big\{\Phi\in\Hc(\div,\om):\div\Phi=0\big\},\\
R(\A)=R(\na)=N(\rot)\cap\Harmn(\om)^{\bot}
&=\big\{\Phi\in\H(\rot,\om):\rot\Phi=0\,\wedge\,\Phi\,\bot\,\Harmn(\om)\big\},
\end{align*}
where we denote the harmonic Neumann fields (cohomology group) by
$$\Harmn(\om)
:=N(\rot)\cap N(\divc)
=\big\{\Phi\in\H(\rot,\om)\cap\Hc(\div,\om):\rot\Phi=0\,\wedge\,\div\Phi=0\big\}.$$

$\cA$ and $\cA^{*}$ are bijective with bounded inverses
\begin{align*}
\cA^{-1}:R(\A)&\to D(\cA),
&
\norm{\cA^{-1}}_{R(\A)\to\Lt(\om)}&=\cp,\\
(\cA^{*})^{-1}:R(\A^{*})&\to D(\cA^{*}),
&
\bnorm{(\cA^{*})^{-1}}_{R(\A^{*})\to\Lt(\om)}&=\cp.
\end{align*}

Let $f\in\hLt(\om)$ and $G\in R(\na)$.
Then 
$$u:=\cA^{-1}G\in D(\cA)=\hH^{1}(\om),\qquad
E:=(\cA^{*})^{-1}f\in D(\cA^{*})=\H(\divc,\om)\cap R(\na)$$
are the unique solutions of
$\na|_{\hLt(\om)}u=G$ and $-\divc|_{R(\na)}E=f$, i.e.,
\begin{align*}
\na u&=G,
&
-\div E&=f
&
&\text{in }\om,\\
&
&
\rot E&=0
&
&\text{in }\om,\\
&
&
n\cdot E&=0
&
&\text{on }\ga,\\
u&\;\bot\;\reals,
&
E&\;\bot\;\Harmn(\om).
\end{align*}

\end{itemize}


\subsubsection{Dirichlet/Neumann Laplacians}
\label{sec:standlap}

We use the latter results.

\begin{itemize}
\item
Let $\om$ be Friedrichs admissible
and let $\A:=\A_{0}=\nac$. 
We introduce the negative Dirichlet Laplacian
\begin{align*}
\Ld:=\A^{*}\A=-\div\,\nac:
D(\Ld)\subset\Lt(\om)\to\Lt(\om);\quad
\varphi\mapsto-\Delta\varphi,
\end{align*}
where
$$D(\Ld)=\big\{\varphi\in\Hc^{1}(\om):\na\varphi\in\H(\div,\om)\big\}
=\big\{\varphi\in\Hc^{1}(\om):\Delta\varphi\in\Lt(\om)\big\}.$$
Note that 
$\cLd=\cA^{*}\cA=\A^{*}\cA=\A^{*}\A=\Ld$
with closed range $R(\Ld)=R(\A^{*})=\Lt(\om)$.
$\Ld$ is selfadjoint, positive, and bijective with bounded inverse
$$\cLd^{-1}=\cA^{-1}(\cA^{*})^{-1}:\Lt(\om)\to D(\Ld),\qquad
\norm{\cLd^{-1}}_{\Lt(\om)\to\Lt(\om)}=\cf^{2}.$$
\eqref{eq:estAA} reads
\begin{align}
\label{eq:est1lap}
\forall\,\varphi\in D(\Ld)\qquad
\norm{\varphi}_{\Lt(\om)}
\leq\cf\norm{\na\varphi}_{\Lt(\om)}
\leq\cf^{2}\norm{\Delta\varphi}_{\Lt(\om)}.
\end{align}

Let $f\in\Lt(\om)$.
Then $u:=\cLd^{-1}f\in D(\Ld)$ is the unique solution of
the Dirichlet Laplace boundary value problem $\Ld u=f$, i.e.,
\begin{align*}
-\Delta u&=f
&
&\text{in }\om,\\
u&=0
&
&\text{on }\ga.
\end{align*}
Note that 
$$u=\cA^{-1}(\cA^{*})^{-1}f=-\nac^{-1}\div|_{R(\nac)}^{-1}f,$$
cf.~Section \ref{sec:graddiv1}.
To find $u\in\Hc^{1}(\om)$ by variational methods we may consider \eqref{eq:varform2}, i.e.,
$$\forall\,\varphi\in\Hc^{1}(\om)\qquad
\scp{\na u}{\na\varphi}_{\Lt(\om)}
=\scp{f}{\varphi}_{\Lt(\om)}.$$

\item
Let $\om$ be Poincar\'e admissible
and let $\A:=-\A_{2}^{*}=\na$.
We introduce the negative Neumann Laplacian
and its reduced version
\begin{align*}
\Ln:=\A^{*}\A=-\divc\,\na:
D(\Ln)\subset\Lt(\om)&\to\Lt(\om);
&
\varphi&\mapsto-\Delta\varphi,\\
\cLn=\cA^{*}\cA=\A^{*}\cA=-\divc\,\na|_{\hLt(\om)}:
D(\cLn)\subset\hLt(\om)&\to\hLt(\om),
\end{align*}
where
\begin{align*}
D(\Ln)&=\big\{\varphi\in\H^{1}(\om):\na\varphi\in\Hc(\div,\om)\big\},\\
D(\cLn)&=\big\{\varphi\in\hH^{1}(\om):\na\varphi\in\Hc(\div,\om)\big\}.
\end{align*}
$\cLn$ is selfadjoint, positive, and bijective
with closed range $R(\Ln)=R(\A^{*})=\hLt(\om)$
and bounded inverse
$$\cLn^{-1}=\cA^{-1}(\cA^{*})^{-1}:\hLt(\om)\to D(\cLn),\qquad
\norm{\cLn^{-1}}_{\hLt(\om)\to\Lt(\om)}=\cp^{2}.$$
\eqref{eq:estAA} reads
\begin{align*}
\forall\,\varphi\in D(\cLn)\qquad
\norm{\varphi}_{\Lt(\om)}
\leq\cp\norm{\na\varphi}_{\Lt(\om)}
\leq\cp^{2}\norm{\Delta\varphi}_{\Lt(\om)}.
\end{align*}

Let $f\in\hLt(\om)$.
Then $u:=\cLn^{-1}f\in D(\Ln)$ is the unique solution of
the Neumann Laplace boundary value problem $\cLn u=f$, i.e.,
\begin{align*}
-\Delta u&=f
&
&\text{in }\om,\\
n\cdot\na u&=0
&
&\text{on }\ga,\\
u&\;\bot\;\reals.
\end{align*}
Note that 
$$u=\cA^{-1}(\cA^{*})^{-1}f=-\na|_{\hLt(\om)}^{-1}\divc|_{R(\na)}^{-1}f,$$
cf.~Section \ref{sec:graddiv1}.
To find $u\in\hH^{1}(\om)$ by variational methods we may consider \eqref{eq:varform2}, i.e.,
$$\forall\,\varphi\in\H^{1}(\om)\qquad
\scp{\na u}{\na\varphi}_{\Lt(\om)}
=\scp{f}{\varphi}_{\Lt(\om)}.$$

\end{itemize}

\subsubsection{Over- and Underdetermined Laplacians}
\label{sec:odetlap}

Let $\om$ be Friedrichs admissible.

\begin{lem}
\label{lem:friedrichsH2}
On $\Hc^{2}(\om)$ the norms 
$\norm{\cdot}_{\H^{2}(\om)}$ and $\norm{\Delta\cdot}_{\Lt(\om)}$
are equivalent. More precisely, 
$$\forall\,\varphi\in\Hc^{2}(\om)\qquad
\norm{\varphi}_{\H^{2}(\om)}
\leq\cd\norm{\Delta\varphi}_{\Lt(\om)},$$
where $\cd:=\sqrt{1+\cf^{2}+\cf^{4}}$.
\end{lem}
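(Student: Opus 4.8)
The plan is to prove the stronger pointwise--type equality $\norm{\nana\varphi}_{\Lt(\om)}=\norm{\Delta\varphi}_{\Lt(\om)}$ on $\Hc^{2}(\om)$, and then to control the two lower order contributions $\norm{\varphi}_{\Lt(\om)}$ and $\norm{\na\varphi}_{\Lt(\om)}$ against $\norm{\Delta\varphi}_{\Lt(\om)}$ using the Friedrichs estimates collected in \eqref{eq:introfpest1}. Since by definition $\Hc^{2}(\om)=\overline{\Cc^{\infty}(\om)}^{\H^{2}(\om)}$, it suffices to verify all identities and estimates for $\varphi\in\Cc^{\infty}(\om)$ and then pass to the limit in the $\H^{2}(\om)$-norm.

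First I would record the ``Hessian versus Laplacian'' identity. For $\varphi\in\Cc^{\infty}(\om)$ two integrations by parts (no boundary terms appear, since $\varphi$ has compact support) give, for each pair of indices $i,j$,
$$\int_{\om}(\p_{i}\p_{j}\varphi)^{2}=\int_{\om}(\p_{i}^{2}\varphi)(\p_{j}^{2}\varphi),$$
and summing over $i,j$ yields $\norm{\nana\varphi}_{\Lt(\om)}^{2}=\norm{\Delta\varphi}_{\Lt(\om)}^{2}$. This is the only genuine computation; the crucial feature is that we work on $\Hc^{2}(\om)$, so the integration by parts is legitimate \emph{without any regularity of} $\ga$, in line with the standing assumption that no regularity of the boundary is needed.

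Next I would bound the remaining two contributions. The second line of \eqref{eq:introfpest1} gives directly $\norm{\varphi}_{\Lt(\om)}\leq\cf^{2}\norm{\Delta\varphi}_{\Lt(\om)}$. For the gradient term, integration by parts and the Cauchy--Schwarz inequality yield
$$\norm{\na\varphi}_{\Lt(\om)}^{2}=-\scp{\varphi}{\Delta\varphi}_{\Lt(\om)}\leq\norm{\varphi}_{\Lt(\om)}\norm{\Delta\varphi}_{\Lt(\om)}\leq\cf^{2}\norm{\Delta\varphi}_{\Lt(\om)}^{2},$$
whence $\norm{\na\varphi}_{\Lt(\om)}\leq\cf\norm{\Delta\varphi}_{\Lt(\om)}$.

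Finally I would assemble the three pieces. Summing the squares gives
$$\norm{\varphi}_{\H^{2}(\om)}^{2}=\norm{\varphi}_{\Lt(\om)}^{2}+\norm{\na\varphi}_{\Lt(\om)}^{2}+\norm{\nana\varphi}_{\Lt(\om)}^{2}\leq(\cf^{4}+\cf^{2}+1)\norm{\Delta\varphi}_{\Lt(\om)}^{2},$$
which is exactly the claimed estimate with $\cd=\sqrt{1+\cf^{2}+\cf^{4}}$, first for $\varphi\in\Cc^{\infty}(\om)$ and then, by continuity of all quantities under $\H^{2}(\om)$-convergence, for every $\varphi\in\Hc^{2}(\om)$. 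The converse inequality $\norm{\Delta\varphi}_{\Lt(\om)}=\norm{\nana\varphi}_{\Lt(\om)}\leq\norm{\varphi}_{\H^{2}(\om)}$ is immediate from the identity, so the two norms are indeed equivalent. I do not expect any real obstacle here; the only points to watch are the density/closure argument and the fact that the $\H^{2}(\om)$-norm involves the full Hessian, so one must invoke the identity above rather than only the scalar Laplacian.
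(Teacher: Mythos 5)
Your proof is correct and follows essentially the same route as the paper: the key Hessian--Laplacian identity $\norm{\nana\varphi}_{\Lt(\om)}=\norm{\Delta\varphi}_{\Lt(\om)}$ via integration by parts on $\Cc^{\infty}(\om)$ extended by density, combined with the Friedrichs chain to absorb the lower-order terms. The only cosmetic difference is that you re-derive the intermediate bound $\norm{\na\varphi}_{\Lt(\om)}\leq\cf\norm{\Delta\varphi}_{\Lt(\om)}$ by hand from $\norm{\na\varphi}_{\Lt(\om)}^{2}=-\scp{\varphi}{\Delta\varphi}_{\Lt(\om)}$, whereas the paper simply cites the already established estimate \eqref{eq:est1lap}, which encodes exactly that computation.
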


\begin{proof}
By \eqref{eq:est1lap}
$$\forall\,\varphi\in D(\Ld)\qquad
\norm{\varphi}_{\Lt(\om)}^{2}
+\norm{\na\varphi}_{\Lt(\om)}^{2}
+\norm{\Delta\varphi}_{\Lt(\om)}^{2}
\leq\cd^{2}\norm{\Delta\varphi}_{\Lt(\om)}^{2}.$$
For $\varphi\in\Cc^{\infty}(\om)$ we observe
$$\sum_{i,j}\scp{\p_{i}\p_{j}\varphi}{\p_{i}\p_{j}\varphi}_{\Lt(\om)}
=\sum_{i,j}\scp{\p_{i}^{2}\varphi}{\p_{j}^{2}\varphi}_{\Lt(\om)}
=\norm{\Delta\varphi}_{\Lt(\om)}^{2},$$
which extends to $\varphi\in\Hc^{2}(\om)$ by continuity
and shows the stated estimate.
\end{proof}


Lemma \ref{lem:friedrichsH2} and Lemma \ref{lem:fatoolbox1} yield that 
the over- and underdetermined Laplacians
\begin{align*}
\Lc:\Hc^{2}(\om)\subset\Lt(\om)&\to\Lt(\om);
&
\varphi&\mapsto-\Delta\varphi,\\
\L:=\Lc^{*}:\H(\Delta,\om)\subset\Lt(\om)&\to\Lt(\om)
\end{align*}
are densely defined and closed with closed ranges $R(\Lc)$ and $R(\L)$.
Moreover,
\begin{align}
\begin{aligned}
\label{eq:Rbilap}
N(\Lc)&=\{0\},
&
R(\Lc)&=N(\L)^{\bot}
=\Lt(\om)\cap\bbH^{\bot}
=\tLt(\om),\\
N(\L)&=\bbH,
&
R(\L)&=N(\Lc)^{\bot}
=\Lt(\om).
\end{aligned}
\end{align}
The reduced operators
\begin{align*}
\cLc:\Hc^{2}(\om)\subset\Lt(\om)&\to\tLt(\om),\\
\cL:\tH(\Delta,\om)\subset\tLt(\om)&\to\Lt(\om),
&\tH(\Delta,\om)&=\H(\Delta,\om)\cap\bbH^{\bot}
\end{align*}
are bijective with bounded inverse operators 
\begin{align*}
\cLc{}^{-1}:\tLt(\om)&\to D(\Lc)=\Hc^{2}(\om),\\
\cL^{-1}:\Lt(\om)&\to D(\cL)=\tH(\Delta,\om),
&
\norm{\cLc{}^{-1}}_{\tLt(\om)\to\Lt(\om)}
=\norm{\cL^{-1}}_{\Lt(\om)\to\Lt(\om)}
&\leq\cf^{2},
\end{align*}
cf.~\eqref{eq:est1lap}.

\begin{rem}
\label{rem:fpest}
In particular, we have for all 
$\varphi\in D(\cLc)=\Hc^{2}(\om)$
and all $\varphi\in D(\cL)=\tH(\Delta,\om)$
$$\norm{\varphi}_{\Lt(\om)}
\leq\cf^{2}\norm{\Delta\varphi}_{\Lt(\om)}.$$
\end{rem}

Let $f\in\tLt(\om)$.
Then $u:=\cLc{}^{-1}f\in D(\cLc)=\Hc^{2}(\om)$ 
is the unique solution of
the overdetermined negative Laplace boundary value problem $\cLc u=f$, i.e.,
\begin{align*}
-\Delta u&=f
&
&\text{in }\om,\\
u&=0
&
&\text{on }\ga,\\
n\cdot\na u&=0
&
&\text{on }\ga.
\end{align*}

\begin{rem}
\label{rem:H2bc}
Note that $u|_{\ga}=0$ implies $n\times\na u|_{\ga}=0$.
Hence, together with $n\cdot\na u|_{\ga}=0$ we see $\na u|_{\ga}=0$.
In other words, for $u\in\Hc^{1}(\om)$ it holds
$$u\in\Hc^{2}(\om)\qequi\na u\in\Hc(\div,\om)\cap\H^{1}(\om).$$ 
\end{rem}

Let $f\in\Lt(\om)$.
Then $u:=\cL^{-1}f\in D(\cL)=\tH(\Delta,\om)$ 
is the unique solution of
the underdetermined negative Laplace boundary value problem $\cL u=f$, i.e.,
\begin{align*}
-\Delta u&=f
&
&\text{in }\om,\\
u&\;\bot\;\bbH.
\end{align*}

\begin{rem}
\label{rem:minmaxlap}
Note that for the different negative Laplacians we have
$\Lc\subset\Ld,\Ln\subset\L$, that is
$$\Lc\subset\Ld=-\div\nac\subset\L,\qquad
\Lc\subset\Ln=-\divc\na\subset\L.$$
In this sense, $\Lc$ and $\L$ are minimal and maximal 
$\Lt(\om)$-realisations of the negative Laplacian, respectively.
\end{rem}

\begin{theo}
\label{theo:cptharm}
Let $\om$ be bounded. Then
$D(\cL)=\tH(\Delta,\om)\hookrightarrow\Lt(\om)$
is compact.
\end{theo}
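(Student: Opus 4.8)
The plan is to reduce the claim to the classical Rellich compact embedding $\Hc^{1}(\om)\hookrightarrow\Lt(\om)$ by exploiting the abstract duality in Lemma \ref{lem:fatoolbox1}(iv). Since $\om$ is bounded, it is Friedrichs admissible by Remark \ref{rem:admdom1}(i), so the over- and underdetermined Laplacians $\Lc$ and $\L=\Lc^{*}$ from Section \ref{sec:odetlap} are densely defined and closed with closed ranges, and all the structural facts in \eqref{eq:Rbilap} are at our disposal.

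First I would set up the dual pair $(\A,\A^{*})=(\Lc,\L)$ with $\H_{0}=\H_{1}=\Lt(\om)$. By \eqref{eq:Rbilap} we have $N(\Lc)=\{0\}$ and $N(\L)=\bbH$, hence the reduced operators satisfy
$$D(\cA)=D(\cLc)=D(\Lc)\cap N(\Lc)^{\bot}=\Hc^{2}(\om),\qquad
D(\cA^{*})=D(\cL)=D(\L)\cap\bbH^{\bot}=\tH(\Delta,\om).$$
Thus $\cA^{*}=\cL$, and the embedding $D(\cA^{*})\hookrightarrow\H_{1}$ is exactly the embedding $\tH(\Delta,\om)\hookrightarrow\Lt(\om)$ under investigation. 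Now I would invoke Lemma \ref{lem:fatoolbox1}(iv) for this dual pair: the embedding $D(\cA^{*})=\tH(\Delta,\om)\hookrightarrow\Lt(\om)$ is compact if and only if $D(\cA)=\Hc^{2}(\om)\hookrightarrow\Lt(\om)$ is compact. This is the heart of the matter; it transfers compactness from the low-regularity graph space $\tH(\Delta,\om)$ to the highly regular space $\Hc^{2}(\om)$, so that it remains only to treat the latter, elementary embedding.

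Finally, I would show that $\Hc^{2}(\om)\hookrightarrow\Lt(\om)$ is compact whenever $\om$ is bounded, with no regularity of $\ga$ needed. Since $\Hc^{2}(\om)\hookrightarrow\Hc^{1}(\om)$ continuously, it suffices to prove compactness of $\Hc^{1}(\om)\hookrightarrow\Lt(\om)$. As $\om$ is bounded, fix an open ball $B\supset\om$; extension by zero defines an isometric embedding $\Hc^{1}(\om)\to\Hc^{1}(B)$, where $\Hc^{1}(B)$ is the usual $\H^{1}_{0}(B)$, and $\Hc^{1}(B)\hookrightarrow\Lt(B)$ is compact by the classical Rellich--Kondrachov theorem on the ball $B$. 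Restricting back to $\om$ yields the claim. The main (and only mild) obstacle is precisely this last step: one must avoid invoking any boundary regularity of $\om$, which is exactly why the argument is routed through the homogeneous space $\Hc^{1}(\om)$, where extension by zero is available regardless of the shape of $\ga$. The genuine functional-analytic work is done entirely by Lemma \ref{lem:fatoolbox1}(iv).
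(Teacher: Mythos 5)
Your proposal is correct and follows essentially the same route as the paper's own proof: the paper likewise applies Lemma \ref{lem:fatoolbox1}(iv) to the dual pair $(\Lc,\L)$ to transfer compactness from $D(\cLc)=\Hc^{2}(\om)$ to $D(\cL)=\tH(\Delta,\om)$, and then settles the former by Rellich's selection theorem for $\Hc^{1}(\om)$. The only difference is that you spell out the elementary Rellich step (extension by zero into a ball, which works for arbitrary bounded $\om$) and implicitly use Lemma \ref{lem:friedrichsH2} to identify the graph norm of $\cLc$ with the $\H^{2}(\om)$-norm, details the paper leaves to the reader.
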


\begin{proof}
By Lemma \ref{lem:fatoolbox1} we have that the embedding 
$D(\cLc)=D(\Lc)=\Hc^{2}(\om)\hookrightarrow\Lt(\om)$
is compact if and only if the embedding 
$D(\cL)=\tH(\Delta,\om)\hookrightarrow\Lt(\om)$
is compact.
Hence the latter embedding is compact by Rellich's selection theorem
for, e.g., $\Hc^{1}(\om)$.
\end{proof}

\subsubsection{Bi-Laplacians and Biharmonic Operators}
\label{sec:biharm1}

Let $\om$ be Friedrichs admissible
and let us consider $\A=\Lc$ 
with $\A^{*}=\L$ from the latter section.

\begin{itemize}

\item
We introduce the Dirichlet bi-Laplacian (Dirichlet biharmonic operator)
$$\Bd:=\A^{*}\A=\L\Lc:
D(\Bd)\to\Lt(\om);\quad
\varphi\mapsto\Delta^{2}\varphi,$$
where
$$D(\Bd)=\big\{u\in\Hc^{2}(\om):\Delta u\in\H(\Delta,\om)\big\}
=\big\{u\in\Hc^{2}(\om):\Delta^{2}u\in\Lt(\om)\big\}.$$
Then
$\cBd=\cA^{*}\cA=\cL\cLc=\L\cLc=\L\Lc=\Bd$
with closed range $R(\Bd)=R(\L)=\Lt(\om)$.
$\Bd$ is selfadjoint, positive, and bijective with bounded inverse
$$\cBd^{-1}:=\cA^{-1}(\cA^{*})^{-1}=\cLc{}^{-1}\cL^{-1}:
\Lt(\om)\to D(\Bd),\qquad
\norm{\Bd^{-1}}_{\Lt(\om)\to\Lt(\om)}
\leq\cf^{4}.$$
\eqref{eq:estAA} reads
\begin{align*}
\forall\,\varphi\in D(\Bd)\qquad
\norm{\varphi}_{\Lt(\om)}
\leq\cf^{2}\norm{\Delta\varphi}_{\Lt(\om)}
\leq\cf^{4}\norm{\Delta^{2}\varphi}_{\Lt(\om)}.
\end{align*}

Let $f\in\Lt(\om)$.
Then $u:=\cBd^{-1}f\in D(\Bd)$ is the unique solution of
the Dirichlet boundary value problem for the bi-Laplacian $\Bd u=f$, i.e.,
\begin{align*}
\Delta^{2}u&=f
&
&\text{in }\om,\\
u&=0
&
&\text{on }\ga,\\
n\cdot\na u&=0
&
&\text{on }\ga.
\end{align*}
Note that 
$u=\cLc{}^{-1}\cL^{-1}f$.
To find $u\in\Hc^{2}(\om)$ by variational methods we may consider \eqref{eq:varform2}, i.e.,
\begin{align}
\label{eq:varBD}
\forall\,\varphi\in\Hc^{2}(\om)\qquad
\scp{\Delta u}{\Delta\varphi}_{\Lt(\om)}
=\scp{ f}{\varphi}_{\Lt(\om)}.
\end{align}

\item
Analogously, we may consider the Neumann bi-Laplacian (Neumann biharmonic operator)
and its reduced version
\begin{align*}
\Bn:=\A\A^{*}=\Lc\L:
D(\Bn)&\to\Lt(\om),\\
\cBn:=\cA\cA^{*}=\cLc\cL:
D(\cBn)&\to R(\Lc)=\tLt(\om),
\end{align*}
where we recall \eqref{eq:Rbilap} and 
\begin{align*}
D(\Bn)&=\big\{u\in\H(\Delta,\om):\Delta u\in\Hc^{2}(\om)\big\},\\
D(\cBn)&=\big\{u\in\tH(\Delta,\om):\Delta u\in\Hc^{2}(\om)\big\}.
\end{align*}
$\Bn$ and $\cBn$ have closed range $R(\Bn)=R(\Lc)=\tLt(\om)$.
$\cBn$ is selfadjoint, positive, and bijective with bounded inverse
$$\cBn^{-1}:=(\cA^{*})^{-1}\cA^{-1}=\cL^{-1}\cLc{}^{-1}:
\tLt(\om)\to D(\cBn),\qquad
\norm{\cBn^{-1}}_{\tLt(\om)\to\Lt(\om)}=\cf^{4}.$$
\eqref{eq:estAA} reads
\begin{align*}
\forall\,\varphi\in D(\cBn)\qquad
\norm{\varphi}_{\Lt(\om)}
\leq\cf^{2}\norm{\Delta\varphi}_{\Lt(\om)}
\leq\cf^{4}\norm{\Delta^{2}\varphi}_{\Lt(\om)}.
\end{align*}

Let $f\in\tLt(\om)$.
Then $u:=\cBn^{-1}f\in D(\cBn)$ is the unique solution of
the Neumann boundary value problem for the bi-Laplacian $\cBn u=f$, i.e.,
\begin{align*}
\Delta^{2}u&=f
&
&\text{in }\om,\\
\Delta u&=0
&
&\text{on }\ga,\\
n\cdot\na\Delta u&=0
&
&\text{on }\ga,\\
u&\;\bot\;\bbH.
\end{align*}
As in Remark \ref{rem:H2bc} we have $\na\Delta u|_{\ga}=0$.
Note that 
$u=\cL^{-1}\cLc{}^{-1}f$.
To find $u\in\tH(\Delta,\om)$ by variational methods we may consider \eqref{eq:varform2}, i.e.,
\begin{align}
\label{eq:varBN}
\forall\,\varphi\in\H(\Delta,\om)\qquad
\scp{\Delta u}{\Delta\varphi}_{\Lt(\om)}
=\scp{ f}{\varphi}_{\Lt(\om)}.
\end{align}

\end{itemize}

\begin{rem}[over- and underdetermined Laplacians]
\label{rem:biharmoverdetlap}
Recall Section \ref{sec:odetlap}.
\begin{itemize}
\item
For $u:=\Bd^{-1}f=\cLc{}^{-1}\cL^{-1}f$
we get that $\wt{u}:=\Delta u=\cL^{-1}f$
is the unique solution of the underdetermined Laplace problem.
Hence we may solve the underdetermined Laplace problem
using a variational formulation for the Dirichlet bi-Laplace problem
\eqref{eq:varBD}.
In fact, we compute a potential $u$ and set $\wt{u}:=\cLc u=\Delta u$,
cf.~Lemma \ref{lem:fatoolbox2}(iii).
\item
For $u:=\cBn^{-1}f=\cL^{-1}\cLc{}^{-1}f$
we get that $\wt{u}:=\Delta u=\cLc{}^{-1}f$
is the unique solution of the overdetermined Laplace problem.
Thus we can solve the overdetermined Laplace problem
using a variational formulation for the Neumann bi-Laplace problem
\eqref{eq:varBN}.
Here, we compute a potential $u$ and set $\wt{u}:=\cL u=\Delta u$,
cf.~Lemma \ref{lem:fatoolbox2}(iii).
\end{itemize}
\end{rem}

\subsubsection{A Zoo of Biharmonic Operators}
\label{sec:zoo1}

Let $\om$ be Poincar\'e admissible.
Note that $\reals\subset\bbH$ and hence 
$$\tLt(\om)\subset\hLt(\om).$$
Let us recall the different reduced negative Laplacians
$$\cLc,\qquad
\cLd=\Ld=-\div\nac,\qquad
\cLn=-\divc\na_{\hLt(\om)},\qquad
\cL$$
with domains of definition
\begin{align*}
D(\cLc)&=\Hc^{2}(\om),
&
D(\cLd)&=\big\{\varphi\in\Hc^{1}(\om):\Delta\varphi\in\Lt(\om)\big\},\\
D(\cL)&=\tH(\Delta,\om),
&
D(\cLn)&=\big\{\varphi\in\hH^{1}(\om):\na\varphi\in\Hc(\div,\om)\big\}
\end{align*}
and their respective bounded inverse operators 
\begin{align}
\begin{aligned}
\label{eq:cLinv}
\cLc{}^{-1}:\tLt(\om)&\to D(\cLc)\subset\Lt(\om),
&
\cLd^{-1}:\Lt(\om)&\to D(\cLd)\subset\Lt(\om),\\
\cL^{-1}:\Lt(\om)&\to D(\cL)\subset\tLt(\om),
&
\cLn^{-1}:\hLt(\om)&\to D(\cLn)\subset\hLt(\om).
\end{aligned}
\end{align}

In Section \ref{sec:biharm1} we already discussed 
the Dirichlet and Neumann biharmonic operators
$$\cB_{\cdot,\circ}:=\cBd=\cL\cLc=\L\Lc=\Bd,\qquad
\cB_{\circ,\cdot}:=\cBn=\cLc\cL,$$
being selfadjoint, positive, bijective and boundedly invertible
with the well posed (well defined and uniquely solvable) 
inverse operators $\cLc{}^{-1}\cL^{-1}$ and $\cL^{-1}\cLc{}^{-1}$.

Combining the four different Laplacians we obtain 
a whole zoo of formally sixteen biharmonic operators.
Due to the restrictions of $\hLt(\om)$ and $\tLt(\om)$
some combinations are -- even formally -- not possible
(without further restrictions), those are the five combinations
$$\cLc{}^{-1}\cLc{}^{-1},\qquad
\cLc{}^{-1}\cLd^{-1},\qquad
\cLc{}^{-1}\cLn^{-1},\qquad
\cLn^{-1}\cLc{}^{-1},\qquad
\cLn^{-1}\cLd^{-1},$$
corresponding to 
$\cLc\cLc$, $\cLd\cLc$, $\cLn\cLc$, $\cLc\cLn$, $\cLd\cLn$, respectively.
One has to consider different operators to realise even stronger or weaker boundary conditions.
We come back to this later.

We end up with only nine more well posed 
biharmonic operators, namely
\begin{align*}
\cB_{\cdot,\cdot}&:=\cL\cL,
&
\cB_{\ttD,\cdot}&:=\cLd\cL,
&
\cB_{\ttN,\cdot}&:=\cLn\cL,
&
\cB_{\cdot,\ttN}&:=\cL\cLn,
&
\cBnn&:=\cLn\cLn,\\
\cB_{\circ,\ttD}&:=\cLc\cLd,
&
\cB_{\cdot,\ttD}&:=\cL\cLd,
&
\cBdd&:=\cLd\cLd,
&
\cB_{\ttN,\ttD}&:=\cLn\cLd.
\end{align*}
All eleven biharmonic operators are
bijective and boundedly invertible, more precisely
\begin{align*}
\cB_{\cdot,\circ}^{-1}
=\cLc{}^{-1}\cL^{-1}
:\Lt(\om)&\to\Lt(\om),
&
\cB_{\circ,\cdot}^{-1}
=\cL^{-1}\cLc{}^{-1}
:\tLt(\om)&\to\tLt(\om),\\
\cB_{\cdot,\cdot}^{-1}
=\cL^{-1}\cL^{-1}
:\Lt(\om)&\to\tLt(\om),
&
\cB_{\ttD,\cdot}^{-1}
=\cL^{-1}\cLd^{-1}
:\Lt(\om)&\to\tLt(\om),\\
\cB_{\ttN,\cdot}^{-1}
=\cL^{-1}\cLn^{-1}
:\hLt(\om)&\to\tLt(\om),
&
\cB_{\cdot,\ttN}^{-1}
=\cLn^{-1}\cL^{-1}
:\Lt(\om)&\to\hLt(\om),\\
\cBnn^{-1}
=\cLn^{-1}\cLn^{-1}
:\hLt(\om)&\to\hLt(\om),
&
\cB_{\circ,\ttD}^{-1}
=\cLd^{-1}\cLc{}^{-1}
:\tLt(\om)&\to\Lt(\om),\\
\cB_{\cdot,\ttD}^{-1}
=\cLd^{-1}\cL^{-1}
:\Lt(\om)&\to\Lt(\om),
&
\cBdd^{-1}
=\cLd^{-1}\cLd^{-1}
:\Lt(\om)&\to\Lt(\om),\\
\cB_{\ttN,\ttD}^{-1}
=\cLd^{-1}\cLn^{-1}
:\hLt(\om)&\to\Lt(\om).
\end{align*}

Let us write down the classical formulations of the latter eleven 
(some apparently over- and underdetermined) biharmonic operators
as uniquely solvable boundary value problems for the bi-Laplacian:

\begin{enumerate}[\sf\bfseries(i)]

\item
$\cB_{\cdot,\circ}^{-1}
=\cLc{}^{-1}\cL^{-1}:
\Lt(\om)\to\big\{u\in D(\cLc)=\Hc^{2}(\om):
\Delta u\in D(\cL)=\tH(\Delta,\om)\big\}$\\
yields the unique solution $u$ of
\begin{align*}
\Delta^{2}u&=f\in\Lt(\om)
&
&\text{in }\om,\\
u&=0
&
&\text{on }\ga,\\
n\cdot\na u&=0
&
&\text{on }\ga,\\
\Delta u&\;\bot\;\bbH.
\end{align*}
This is the Dirichlet biharmonic problem from
\eqref{eq:bcdef1}, \eqref{eq:bcdef2}.
The last condition is redundant.

\item
$\cB_{\circ,\cdot}^{-1}
=\cL^{-1}\cLc{}^{-1}:
\tLt(\om)\to\big\{u\in D(\cL)=\tH(\Delta,\om):
\Delta u\in D(\cLc)=\Hc^{2}(\om)\big\}$\\
yields the unique solution $u$ of
\begin{align*}
\Delta^{2}u&=f\in\tLt(\om)
&
&\text{in }\om,\\
\Delta u&=0
&
&\text{on }\ga,\\
n\cdot\na\Delta u&=0
&
&\text{on }\ga,\\
u&\;\bot\;\bbH.
\end{align*}
This is the Neumann biharmonic problem from
\eqref{eq:bcdef1}, \eqref{eq:bcdef2}.

\item
$\cB_{\cdot,\cdot}^{-1}
=\cL^{-1}\cL^{-1}:
\Lt(\om)\to 
\big\{u\in D(\cL)=\tH(\Delta,\om):
\Delta u\in D(\cL)=\tH(\Delta,\om)\big\}$\\
yields the unique solution $u$ of
\begin{align*}
\Delta^{2}u&=f\in\Lt(\om)
&
&\text{in }\om,\\
u&\;\bot\;\bbH,\\
\Delta u&\;\bot\;\bbH.
\end{align*}

\item
$\cB_{\ttD,\cdot}^{-1}
=\cL^{-1}\cLd^{-1}:
\Lt(\om)\to 
\big\{u\in D(\cL)=\tH(\Delta,\om):
\Delta u\in D(\cLd)\big\}$\\
yields the unique solution $u$ of
\begin{align*}
\Delta^{2}u&=f\in\Lt(\om)
&
&\text{in }\om,\\
\Delta u&=0
&
&\text{on }\ga,\\
u&\;\bot\;\bbH.
\end{align*}

\item
$\cB_{\ttN,\cdot}^{-1}
=\cL^{-1}\cLn^{-1}:
\hLt(\om)\to 
\big\{u\in D(\cL)=\tH(\Delta,\om):
\Delta u\in D(\cLn)\big\}$\\
yields the unique solution $u$ of
\begin{align*}
\Delta^{2}u&=f\in\hLt(\om)
&
&\text{in }\om,\\
n\cdot\na\Delta u&=0
&
&\text{on }\ga,\\
u&\;\bot\;\bbH,\\
\Delta u&\;\bot\;\reals.
\end{align*}

\item
$\cB_{\cdot,\ttN}^{-1}
=\cLn^{-1}\cL^{-1}:
\Lt(\om)\to 
\big\{u\in D(\cLn):
\Delta u\in D(\cL)=\tH(\Delta,\om)\big\}$\\
yields the unique solution $u$ of
\begin{align*}
\Delta^{2}u&=f\in\Lt(\om)
&
&\text{in }\om,\\
n\cdot\na u&=0
&
&\text{on }\ga,\\
u&\;\bot\;\reals,\\
\Delta u&\;\bot\;\bbH.
\end{align*}

\item
$\cBnn^{-1}
=\cLn^{-1}\cLn^{-1}:
\hLt(\om)\to 
\big\{u\in D(\cLn):
\Delta u\in D(\cLn)\big\}$\\
yields the unique solution $u$ of
\begin{align*}
\Delta^{2}u&=f\in\hLt(\om)
&
&\text{in }\om,\\
n\cdot\na u&=0
&
&\text{on }\ga,\\
n\cdot\na\Delta u&=0
&
&\text{on }\ga,\\
u&\;\bot\;\reals,\\
\Delta u&\;\bot\;\reals.
\end{align*}
This is the Riquier biharmonic problem from
\eqref{eq:bcdef1}, \eqref{eq:bcdef2}.
The last condition is redundant.

\item
$\cB_{\circ,\ttD}^{-1}
=\cLd^{-1}\cLc{}^{-1}:
\tLt(\om)\to\big\{u\in D(\cLd):
\Delta u\in D(\cLc)=\Hc^{2}(\om)\big\}$\\
yields the unique solution $u$ of
\begin{align*}
\Delta^{2}u&=f\in\tLt(\om)
&
&\text{in }\om,\\
u&=0
&
&\text{on }\ga,\\
\Delta u&=0
&
&\text{on }\ga,\\
n\cdot\na\Delta u&=0
&
&\text{on }\ga.
\end{align*}

\item
$\cB_{\cdot,\ttD}^{-1}
=\cLd^{-1}\cL^{-1}:
\Lt(\om)\to\big\{u\in D(\cLd):
\Delta u\in D(\cL)=\tH(\Delta,\om)\big\}$\\
yields the unique solution $u$ of
\begin{align*}
\Delta^{2}u&=f\in\Lt(\om)
&
&\text{in }\om,\\
u&=0
&
&\text{on }\ga,\\
\Delta u&\;\bot\;\bbH.
\end{align*}

\item
$\cBdd^{-1}
=\cLd^{-1}\cLd^{-1}:
\Lt(\om)\to 
\big\{u\in D(\cLd):
\Delta u\in D(\cLd)\big\}$\\
yields the unique solution $u$ of
\begin{align*}
\Delta^{2}u&=f\in\Lt(\om)
&
&\text{in }\om,\\
u&=0
&
&\text{on }\ga,\\
\Delta u&=0
&
&\text{on }\ga.
\end{align*}
This is the Navier biharmonic problem from
\eqref{eq:bcdef1}, \eqref{eq:bcdef2}.

\item
$\cB_{\ttN,\ttD}^{-1}
=\cLd^{-1}\cLn^{-1}:
\hLt(\om)\to 
\big\{u\in D(\cLd):
\Delta u\in D(\cLn)\big\}$\\
yields the unique solution $u$ of
\begin{align*}
\Delta^{2}u&=f\in\hLt(\om)
&
&\text{in }\om,\\
u&=0
&
&\text{on }\ga,\\
n\cdot\na\Delta u&=0
&
&\text{on }\ga,\\
\Delta u&\;\bot\;\reals.
\end{align*}

\end{enumerate}

\subsubsection{Over- and Underdetermined Biharmonic Operators}
\label{sec:odetbiharm}

Let $\om$ be Friedrichs admissible.
We shall follow the rationale from Section \ref{sec:odetlap}.
Note that \eqref{eq:est1lap} shows
\begin{align}
\label{eq:est1bilap}
\forall\,\varphi\in\Hc^{4}(\om)\qquad
\norm{\varphi}_{\Lt(\om)}
\leq\cf\norm{\na\varphi}_{\Lt(\om)}
\leq\cf^{2}\norm{\Delta\varphi}_{\Lt(\om)}
\leq\cf^{3}\norm{\Delta\na\varphi}_{\Lt(\om)}
\leq\cf^{4}\norm{\Delta^{2}\varphi}_{\Lt(\om)},
\end{align}
which holds also for all $\varphi\in\Hc^{3}(\om)$
with $\Delta^{2}\varphi\in\Lt(\om)$.

\begin{lem}
\label{lem:friedrichsH4}
On $\Hc^{4}(\om)$ the norms 
$\norm{\cdot}_{\H^{4}(\om)}$ and $\norm{\Delta^{2}\cdot}_{\Lt(\om)}$
are equivalent. More precisely, 
$$\forall\,\varphi\in\Hc^{4}(\om)\qquad
\norm{\varphi}_{\H^{4}(\om)}
\leq\cdd\norm{\Delta^{2}\varphi}_{\Lt(\om)},$$
where $\cdd:=\cd\sqrt{1+\cf^{4}}=\sqrt{1+\cf^{2}+2\cf^{4}+\cf^{6}+\cf^{8}}$.
\end{lem}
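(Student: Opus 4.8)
The plan is to bootstrap from the second-order result in Lemma~\ref{lem:friedrichsH2}, applying it twice -- once to $\varphi$ and once to $\Delta\varphi$ -- and gluing the two estimates with the chain of Friedrichs-type inequalities \eqref{eq:est1bilap}. To make the second application legitimate I would first record that for $\varphi\in\Hc^4(\om)$ one has $\Delta\varphi\in\Hc^2(\om)$: choosing $\varphi_n\in\Cc^\infty(\om)$ with $\varphi_n\to\varphi$ in $\H^4(\om)$, the functions $\Delta\varphi_n\in\Cc^\infty(\om)$ converge to $\Delta\varphi$ in $\H^2(\om)$, whence $\Delta\varphi\in\overline{\Cc^\infty(\om)}^{\H^2(\om)}=\Hc^2(\om)$; of course also $\varphi\in\Hc^2(\om)$.

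Next I would iterate the integration-by-parts identity from the proof of Lemma~\ref{lem:friedrichsH2}, namely $\sum_{i,j}\norm{\p_i\p_j\psi}_{\Lt(\om)}^2=\norm{\Delta\psi}_{\Lt(\om)}^2$ for $\psi\in\Hc^2(\om)$. Writing the $\H^4(\om)$-norm with the ordered-derivative convention and applying the identity with $\psi=\p_k\varphi$ (summed over $k$), respectively with $\psi=\p_k\p_l\varphi$ followed by one further application to $\Delta\varphi$, one obtains
$$
\sum_{i,j,k}\norm{\p_i\p_j\p_k\varphi}_{\Lt(\om)}^2=\norm{\na\Delta\varphi}_{\Lt(\om)}^2,
\qquad
\sum_{i,j,k,l}\norm{\p_i\p_j\p_k\p_l\varphi}_{\Lt(\om)}^2=\norm{\Delta^2\varphi}_{\Lt(\om)}^2,
$$
first for $\varphi\in\Cc^\infty(\om)$ and then for $\varphi\in\Hc^4(\om)$ by density. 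Thus the third- and fourth-order parts are exactly the first- and second-order seminorms of $\Delta\varphi$, so that
$$
\norm{\varphi}_{\H^4(\om)}^2
=\norm{\varphi}_{\H^2(\om)}^2+\norm{\na\Delta\varphi}_{\Lt(\om)}^2+\norm{\Delta^2\varphi}_{\Lt(\om)}^2
\leq\norm{\varphi}_{\H^2(\om)}^2+\norm{\Delta\varphi}_{\H^2(\om)}^2.
$$

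I would then estimate the two summands separately. By Lemma~\ref{lem:friedrichsH2} applied to $\varphi$ together with $\norm{\Delta\varphi}_{\Lt(\om)}\leq\cf^2\norm{\Delta^2\varphi}_{\Lt(\om)}$ from \eqref{eq:est1bilap},
$$
\norm{\varphi}_{\H^2(\om)}^2\leq\cd^2\norm{\Delta\varphi}_{\Lt(\om)}^2\leq\cd^2\cf^4\norm{\Delta^2\varphi}_{\Lt(\om)}^2,
$$
while Lemma~\ref{lem:friedrichsH2} applied to $\Delta\varphi\in\Hc^2(\om)$ gives $\norm{\Delta\varphi}_{\H^2(\om)}^2\leq\cd^2\norm{\Delta^2\varphi}_{\Lt(\om)}^2$. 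Summing the two bounds yields $\norm{\varphi}_{\H^4(\om)}^2\leq\cd^2(1+\cf^4)\norm{\Delta^2\varphi}_{\Lt(\om)}^2=\cdd^2\norm{\Delta^2\varphi}_{\Lt(\om)}^2$, the asserted estimate; the reverse bound $\norm{\Delta^2\varphi}_{\Lt(\om)}\leq\norm{\varphi}_{\H^4(\om)}$ is immediate, giving the equivalence of norms.

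The only delicate point is the bookkeeping in the second step: verifying that the full ordered third- and fourth-order seminorms collapse to $\norm{\na\Delta\varphi}_{\Lt(\om)}$ and $\norm{\Delta^2\varphi}_{\Lt(\om)}$ with all boundary terms vanishing, which is why one argues on $\Cc^\infty(\om)$ and passes to the limit. I do not expect a genuine obstacle; I note that the mildly non-sharp constant $\cdd^2=1+\cf^2+2\cf^4+\cf^6+\cf^8$ (in place of the sharp $1+\cf^2+\cf^4+\cf^6+\cf^8$) appears precisely because the term $\norm{\Delta\varphi}_{\Lt(\om)}^2$ is counted twice when one bounds $\norm{\na\Delta\varphi}_{\Lt(\om)}^2+\norm{\Delta^2\varphi}_{\Lt(\om)}^2$ by the full $\norm{\Delta\varphi}_{\H^2(\om)}^2$.
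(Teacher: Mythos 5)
Your proof is correct and takes essentially the same route as the paper: both arguments bootstrap Lemma \ref{lem:friedrichsH2} twice together with the integration-by-parts identity from its proof and the bound $\norm{\Delta\varphi}_{\Lt(\om)}\leq\cf^{2}\norm{\Delta^{2}\varphi}_{\Lt(\om)}$ from \eqref{eq:est1lap}, arriving at the identical constant $\cdd=\cd\sqrt{1+\cf^{4}}$ (the paper applies the lemma to each $\p_{i}\p_{j}\varphi\in\Hc^{2}(\om)$, you to $\Delta\varphi\in\Hc^{2}(\om)$, which is the same bookkeeping after collapsing the ordered seminorms). Your closing diagnosis of the doubled $\cf^{4}$ term as the source of the mildly non-sharp constant is also accurate for the paper's version of the estimate.
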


\begin{proof}
Note that $\varphi\in\Hc^{4}(\om)$ implies
$\varphi,\,\p_{i}\p_{j}\varphi\in\Hc^{2}(\om)$.
Lemma \ref{lem:friedrichsH2}, its proof, and \eqref{eq:est1lap} show
\begin{align*}
\norm{\varphi}_{\H^{4}(\om)}^{2}
=\norm{\varphi}_{\H^{2}(\om)}^{2}
+\sum_{i,j}\norm{\p_{i}\p_{j}\varphi}_{\H^{2}(\om)}^{2}
\leq\cd^{2}\Big(\hspace*{-2ex}\underbrace{\norm{\Delta\varphi}_{\Lt(\om)}^{2}}_{\displaystyle\leq\cf^{4}\norm{\Delta^{2}\varphi}_{\Lt(\om)}^{2}}
+\hspace*{1ex}\underbrace{\sum_{i,j}\norm{\p_{i}\p_{j}\Delta\varphi}_{\Lt(\om)}^{2}}_{\displaystyle=\norm{\Delta^{2}\varphi}_{\Lt(\om)}^{2}}\Big),
\end{align*}
completing the proof.
\end{proof}


Lemma \ref{lem:friedrichsH4} and Lemma \ref{lem:fatoolbox1} show that 
the over- and underdetermined biharmonic operators (bi-Laplacians)
\begin{align*}
\Bc:\Hc^{4}(\om)\subset\Lt(\om)&\to\Lt(\om);
&
\varphi&\mapsto\Delta^{2}\varphi,\\
\B:=\Bc^{*}:\H(\Delta^{2},\om)\subset\Lt(\om)&\to\Lt(\om)
\end{align*}
are densely defined and closed with closed ranges $R(\Bc)$ and $R(\B)$.
Moreover,
\begin{align}
\begin{aligned}
\label{eq:Rbibiharm}
N(\Bc)&=\{0\},
&
R(\Bc)&=N(\B)^{\bot}
=\Lt(\om)\cap\bbBH^{\bot}
=\cLt(\om),\\
N(\B)&=\bbBH,
&
R(\B)&=N(\Bc)^{\bot}
=\Lt(\om).
\end{aligned}
\end{align}
The reduced operators
\begin{align*}
\cBc:\Hc^{4}(\om)\subset\Lt(\om)&\to\cLt(\om),\\
\cB:\cH(\Delta^{2},\om)\subset\cLt(\om)&\to\Lt(\om),
&
\cH(\Delta^{2},\om)&=\H(\Delta^{2},\om)\cap\bbBH^{\bot}
\end{align*}
are bijective with bounded inverse operators 
\begin{align*}
\cBc{}^{-1}:\cLt(\om)&\to D(\Bc)=\Hc^{4}(\om),\\
\cB^{-1}:\Lt(\om)&\to D(\cB)=\cH(\Delta^{2},\om),
&
\norm{\cBc{}^{-1}}_{\cLt(\om)\to\Lt(\om)}
=\norm{\cB^{-1}}_{\Lt(\om)\to\Lt(\om)}
&\leq\cf^{4},
\end{align*}
cf.~\eqref{eq:est1bilap}.

Let $f\in\cLt(\om)$.
Then $u:=\cBc{}^{-1}f\in D(\Bc)=\Hc^{4}(\om)$ 
is the unique solution of
the overdetermined biharmonic boundary value problem $\cBc u=f$, i.e.,
\begin{align*}
\Delta^{2}u&=f
&
&\text{in }\om,\\
\forall\,|\alpha|\leq3\qquad\p^{\alpha}v&=0
&
&\text{on }\ga.
\end{align*}

Let $f\in\Lt(\om)$.
Then $u:=\cB^{-1}f\in D(\cB)=\cH(\Delta,\om)$ 
is the unique solution of
the underdetermined biharmonic boundary value problem $\cB u=f$, i.e.,
\begin{align*}
\Delta^{2}u&=f
&
&\text{in }\om,\\
u&\;\bot\;\bbBH.
\end{align*}

Note that the latter two boundary value problems for $\Delta^{2}$
complete the list from Section \ref{sec:zoo1}.
Let us call them number {\sf\bfseries(xii)} and {\sf\bfseries(xiii)}.

\begin{rem}
\label{rem:minmaxbiharm}
Note that for the different biharmonic operators we have
$\Bc\subset\Bd,\Bn\subset\B$, that is
$$\Bc\subset\Bd=\L\Lc\subset\B,\qquad
\Bc\subset\Bn=\Lc\L\subset\B.$$
In this sense, $\Bc$ and $\B$ are minimal and maximal 
$\Lt(\om)$-realisations of the biharmonic operator, respectively.
\end{rem}

Recall Theorem \ref{theo:cptharm}.

\begin{theo}
\label{theo:cptbiharm}
Let $\om$ be bounded. Then
$D(\cB)=\cH(\Delta^{2},\om)\hookrightarrow\Lt(\om)$
is compact.
\end{theo}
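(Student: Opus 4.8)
The plan is to mirror the proof of Theorem \ref{theo:cptharm}, transferring compactness from the highly regular minimal domain $D(\cBc)=\Hc^{4}(\om)$ to the rough maximal domain $D(\cB)=\cH(\Delta^{2},\om)$ by means of the adjoint-compactness equivalence in the FA-ToolBox. The key observation is that $\Bc$ and $\B$ form the dual pair to which Lemma \ref{lem:fatoolbox1}(iv) directly applies.

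First I would set $\A:=\Bc$, so that $\A^{*}=\B$ by the very definition of the underdetermined biharmonic operator. Recalling $N(\Bc)=\{0\}$ and $N(\B)=\bbBH$ from \eqref{eq:Rbibiharm}, the reduced domains are $D(\cBc)=D(\Bc)\cap N(\Bc)^{\bot}=\Hc^{4}(\om)$ and $D(\cB)=D(\B)\cap N(\B)^{\bot}=\H(\Delta^{2},\om)\cap\bbBH^{\bot}=\cH(\Delta^{2},\om)$. Lemma \ref{lem:fatoolbox1}(iv) then asserts that $D(\cBc)\hookrightarrow\Lt(\om)$ is compact if and only if $D(\cB)\hookrightarrow\Lt(\om)$ is compact, so it suffices to establish the compactness of the former embedding.

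For this last point I would argue that $\Hc^{4}(\om)\subset\Hc^{1}(\om)$ with continuous inclusion, and, since $\om$ is bounded, Rellich's selection theorem yields the compactness of $\Hc^{1}(\om)\hookrightarrow\Lt(\om)$. Composing the continuous inclusion with this compact embedding shows that $D(\cBc)=\Hc^{4}(\om)\hookrightarrow\Lt(\om)$ is compact, and the equivalence from the previous paragraph then delivers the claimed compactness of $\cH(\Delta^{2},\om)\hookrightarrow\Lt(\om)$.

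There is no genuine obstacle in this argument: the entire proof reduces to invoking the adjoint-compactness equivalence of Lemma \ref{lem:fatoolbox1}(iv) together with Rellich's theorem, exactly as in Theorem \ref{theo:cptharm}. The only item to verify by hand is that the minimal domain $\Hc^{4}(\om)$ embeds compactly into $\Lt(\om)$, which is immediate because it sits continuously inside $\Hc^{1}(\om)$; in particular, and notably, no regularity whatsoever is required of the boundary $\ga$.
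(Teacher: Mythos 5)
Your proposal is correct and coincides with the paper's own proof: both set up the dual pair $(\Bc,\B)$ with $\B=\Bc^{*}$, invoke the adjoint-compactness equivalence of Lemma \ref{lem:fatoolbox1}(iv) to transfer the problem to $D(\cBc)=\Hc^{4}(\om)$, and conclude via Rellich's selection theorem through the continuous inclusion into $\Hc^{1}(\om)$, where indeed no boundary regularity is needed. Your write-up merely makes explicit the identification of the reduced domains via \eqref{eq:Rbibiharm}, which the paper leaves implicit.
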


\begin{proof}
By Lemma \ref{lem:fatoolbox1} we have that the embedding 
$D(\cBc)=D(\Bc)=\Hc^{4}(\om)\hookrightarrow\Lt(\om)$
is compact if and only if the embedding 
$D(\cB)=\cH(\Delta^{2},\om)\hookrightarrow\Lt(\om)$
is compact.
Hence the latter embedding is compact by Rellich's selection theorem
for, e.g., $\Hc^{1}(\om)$.
\end{proof}

\subsubsection{Biharmonic Operators with Mixed Boundary Conditions}
\label{sec:mixedbc1}

In principle, everything works also with mixed boundary conditions.
For example, let us consider a division of $\ga$ into
two relatively open parts $\gat\neq\emptyset$ and $\gan:=\ga\setminus\overline{\gat}$.
We introduce $\H^{1}_{\gat}(\om)$ as closure of $\Cc_{\gat}^{\infty}(\rt)$
(the compact support does not touch $\gat$) in $\H^{1}(\om)$.
Analogously we define $\H_{\gan}(\div,\om)$.

Let $\om$ be such that the embedding 
\begin{align}
\label{eq:cptH1}
\H^{1}(\om)\hookrightarrow\Lt(\om)
\end{align}
is compact, which holds, e.g., for bounded Lipschitz domains $\om$.

By the compact embedding \eqref{eq:cptH1} we obtain
the Friedrichs/Poincar\'e estimate
\begin{align}
\label{eq:fpest1}
\exists\,\cfp>0\quad
\forall\,\varphi\in\H^{1}_{\gat}(\om)\qquad
\norm{\varphi}_{\Lt(\om)}\leq\cfp\norm{\na\varphi}_{\Lt(\om)},
\end{align}
cf.~Definition \ref{def:admdom1}.
As before, we assume $\cfp$ to be the best possible constant.

Then
$$\A:=\na_{\gat}:\H^{1}_{\gat}(\om)\subset\Lt(\om)\to\Lt(\om)$$
is densely defined and closed with adjoint 
$$\A^{*}=\na_{\gat}^{*}=-\div_{\gan}:\H_{\gan}(\div,\om)\subset\Lt(\om)\to\Lt(\om),$$
cf.~\cite{BPS2016a,PS2022a}.
Note that the cases $\gat=\ga$ and $\gat=\emptyset$
have already been discussed by $\nac=\naga$ and $\na=\naes$, respectively.
By \eqref{eq:fpest1} and Lemma \ref{lem:fatoolbox1} 
$R(\A)$ and $R(\A^{*})$ are closed 
and we have
$$N(\na_{\gat})=N(\A)=\{0\},\qquad
R(\div_{\gan})=R(\A^{*})=N(\A)^{\bot}=\Lt(\om).$$
The full de Rham complex reads, cf.~\eqref{complex:A01} and \eqref{complex:derham1},
\begin{equation}
\label{complex:derham2}
\def\arrowlength{10ex}
\def\arrowdistance{.8}
\begin{tikzcd}[column sep=\arrowlength]
\Lt(\om) 
\ar[r, rightarrow, shift left=\arrowdistance, "\A_{0}=\na_{\gat}"] 
\ar[r, leftarrow, shift right=\arrowdistance, "\A_{0}^{*}=-\div_{\gan}"']
&
[2em]
\Lt(\om) 
\ar[r, rightarrow, shift left=\arrowdistance, "\A_{1}=\rot_{\gat}"] 
\ar[r, leftarrow, shift right=\arrowdistance, "\A_{1}^{*}=\rot_{\gan}"']
& 
[2em]
\Lt(\om) 
\arrow[r, rightarrow, shift left=\arrowdistance, "\A_{2}=\div_{\gat}"] 
\arrow[r, leftarrow, shift right=\arrowdistance, "\A_{2}^{*}=-\na_{\gan}"']
& 
[2em]
\Lt(\om).
\end{tikzcd}
\end{equation}

The negative Dirichlet-Neumann-Laplacian
$$-\Delta_{\gat}:=\cA^{*}\cA=\A^{*}\A=-\div_{\gan}\,\na_{\gat}:
D(\Delta_{\gat})\subset\Lt(\om)\to\Lt(\om),$$
where
$$D(\Delta_{\gat})=\big\{\varphi\in\H^{1}_{\gat}(\om):\na\varphi\in\H_{\gan}(\div,\om)\big\},$$
is selfadjoint, positive, and bijective with bounded inverse
$$-\Delta_{\gat}^{-1}=\cA^{-1}(\cA^{*})^{-1}:\Lt(\om)\to D(\Delta_{\gat}),\qquad
\bnorm{\Delta_{\gat}^{-1}}_{\Lt(\om)\to\Lt(\om)}=\cfp^{2}.$$
\eqref{eq:estAA} reads
\begin{align}
\label{eq:est1lapmixed}
\forall\,\varphi\in D(\Delta_{\gat})\qquad
\norm{\varphi}_{\Lt(\om)}
\leq\cfp\norm{\na\varphi}_{\Lt(\om)}
\leq\cfp^{2}\norm{\Delta\varphi}_{\Lt(\om)}.
\end{align}

Let $f\in\Lt(\om)$.
Then $u:=-\Delta_{\gat}^{-1}f\in D(\Delta_{\gat})$ is the unique solution of
the Dirichlet-Neumann Laplace boundary value problem $-\Delta_{\gat} u=f$, i.e.,
\begin{align*}
\Delta u&=f
&
&\text{in }\om,\\
u&=0
&
&\text{on }\gat,\\
n\cdot\na u&=0
&
&\text{on }\gan.
\end{align*}
To find $u\in\H^{1}_{\gat}(\om)$ by variational methods we may consider \eqref{eq:varform2}, i.e.,
$$\forall\,\varphi\in\H^{1}_{\gat}(\om)\qquad
\scp{\na u}{\na\varphi}_{\Lt(\om)}
=\scp{f}{\varphi}_{\Lt(\om)}.$$
Moreover, $v:=\Delta_{\gat}^{-1}\Delta_{\gamma_{\mathsf{t}}}^{-1}f\in D(\Delta_{\gamma_{\mathsf{t}}}\Delta_{\gat})$
for some other boundary pair 
$\gamma_{\mathsf{t}}\neq\emptyset$ and $\gamma_{\mathsf{n}}:=\ga\setminus\overline{\gamma_{\mathsf{t}}}$
is the unique solution of
the Dirichlet-Neumann biharmonic boundary value problem $\Delta_{\gamma_{\mathsf{t}}}\Delta_{\gat}v=f$, i.e.,
\begin{align*}
\Delta v^{2}&=f
&
&\text{in }\om,\\
v&=0
&
&\text{on }\gat,\\
n\cdot\na v&=0
&
&\text{on }\gan,\\
\Delta v&=0
&
&\text{on }\gamma_{\mathsf{t}},\\
n\cdot\na\Delta v&=0
&
&\text{on }\gamma_{\mathsf{n}}.
\end{align*}

\subsection{Applications to the Hessian Complex}
\label{sec:appbiharm}

Let $\om$ be a bounded Lipschitz domain,
and recall the boundary parts $\gat$ and $\gan$ 
and the definition of the Sobolev spaces as closures of test fields from the latter section.
In the following we shall apply our theory to the
Hessian complex (complex of the biharmonic equation and general relativity)
\begin{equation}
\label{complex:biharm}
\def\arrowlength{10ex}
\def\arrowdistance{.8}
\begin{tikzcd}[column sep=\arrowlength]
\Lt(\om) 
\ar[r, rightarrow, shift left=\arrowdistance, "\A_{0}=\nana_{\gat}"] 
\ar[r, leftarrow, shift right=\arrowdistance, "\A_{0}^{*}=\divDiv_{\bbS,\gan}"']
&
[3em]
\Lt_{\bbS}(\om) 
\ar[r, rightarrow, shift left=\arrowdistance, "\A_{1}=\Rot_{\bbS,\gat}"] 
\ar[r, leftarrow, shift right=\arrowdistance, "\A_{1}^{*}=\symRot_{\bbT,\gan}"']
& 
[3em]
\Lt_{\bbT}(\om) 
\arrow[r, rightarrow, shift left=\arrowdistance, "\A_{2}=\Div_{\bbT,\gat}"] 
\arrow[r, leftarrow, shift right=\arrowdistance, "\A_{2}^{*}=-\devna_{\gan}"']
& 
[3em]
\Lt(\om),
\end{tikzcd}
\end{equation}
cf.~\eqref{complex:A01} and \eqref{complex:derham1}, 
with the densely defined and closed linear operators
(acting row-wise on tensors)
\begin{align*}
\nana_{\gat}:\H^{2}_{\gat}(\om)\subset\Lt(\om)&\to\Lt_{\bbS}(\om);
&
\phi&\mapsto\nana\phi,\\
\Rot_{\bbS,\gat}:\H_{\bbS,\gat}(\Rot,\om)\subset\Lt_{\bbS}(\om)&\to\Lt_{\bbT}(\om);
&
\Phi&\mapsto\Rot\Phi,\\
\Div_{\bbT,\gat}:\H_{\bbT,\gat}(\Div,\om)\subset\Lt_{\bbT}(\om)&\to\Lt(\om);
&
\Psi&\mapsto\Div\Psi,\\
\divDiv_{\bbS,\gan}:\H_{\bbS,\gan}(\divDiv,\om)\subset\Lt_{\bbS}(\om)&\to\Lt(\om);
&
\Phi&\mapsto\divDiv\Phi,\\
\symRot_{\bbT,\gan}:\H_{\bbT,\gan}(\symRot,\om)\subset\Lt_{\bbT}(\om)&\to\Lt_{\bbS}(\om);
&
\Psi&\mapsto\Rot\Psi,\\
\devna_{\gan}:\H^{1}_{\gan}(\om)\subset\Lt(\om)&\to\Lt_{\bbT}(\om);
&
\psi&\mapsto\nana\psi,
\end{align*}
cf.~\cite{PS2024a,PZ2020a} for well-posedness.

Recall the identity \eqref{eq:DeltadivDivnana}.
Hence, another way to look at the biharmonic equation 
-- underlining more the geometry (complex property) of the underlying operators --
is to investigate the biharmonic operator 
$$\B_{\gat}:=\A_{0}^{*}\A_{0}=\divDiv_{\bbS,\gan}\nana_{\gat}:
D(\A_{0}^{*}\A_{0})\subset\Lt(\om)\to\Lt(\om);\qquad
\Phi\mapsto\divDiv\nana\Phi,$$
where
$$D(\B_{\gat})
=\big\{\varphi\in\H^{2}_{\gat}(\om):\nana\varphi\in\H_{\bbS,\gan}(\divDiv,\om)\big\}.$$
For $\gat=\ga$ we get back the Dirichlet biharmonic operator, 
i.e., $\B_{\ga}=\Bd$.
But if $\gat\neq\ga$ we obtain a different operator due to the boundary conditions
being imposed on the scalars $u$ and $\Delta u$ for $\Delta^{2}$, and on the other hand 
on the scalar $u$ and the symmetric tensor $S:=\nana u$ 
for $\B_{\gat}$. 

For simplicity, let us assume $\emptyset\neq\gat\neq\ga$.
The Friedrichs/Poincar\'e estimate \eqref{eq:fpest1} yields
\begin{align}
\label{eq:fpest2}
\forall\,\varphi\in\H^{2}_{\gat}(\om)\qquad
\norm{\varphi}_{\Lt(\om)}
\leq\cfp\norm{\na\varphi}_{\Lt(\om)}
\leq\cfp^{2}\norm{\nana\varphi}_{\Lt(\om)}.
\end{align}
By \eqref{eq:fpest2} and Lemma \ref{lem:fatoolbox1}
$R(\A_{0})$ and $R(\B_{\gat})=R(\A_{0}^{*})$ are closed and
$$N(\B_{\gat})=N(\A_{0})=\{0\},\qquad
R(\B_{\gat})=R(\A_{0}^{*})=N(\A_{0})^{\bot}=\Lt(\om),$$
which shows $\cB_{\gat}=\B_{\gat}$.
$\B_{\gat}$ is selfadjoint, positive, and bijective with bounded inverse
$$\cB_{\gat}^{-1}=\cA_{0}^{-1}(\cA_{0}^{*})^{-1}:\Lt(\om)\to D(\B_{\gat}),\qquad
\norm{\cB_{\gat}^{-1}}_{\Lt(\om)\to\Lt(\om)}\leq\cfp^{4}.$$
\eqref{eq:estAA} reads
$$\forall\,\varphi\in D(\B_{\gat})\qquad
\norm{\varphi}_{\Lt(\om)}
\leq\cfp^{2}\norm{\nana\varphi}_{\Lt(\om)}
\leq\cfp^{4}\norm{\divDiv\nana\varphi}_{\Lt(\om)}
=\cfp^{4}\norm{\Delta^{2}\varphi}_{\Lt(\om)}.$$

Let $f\in\Lt(\om)$.
Then $u:=\cB_{\gat}^{-1}f\in D(\B_{\gat})$ is the unique solution of
the Dirichlet/Neumann biharmonic boundary value problem $\B_{\gat}u=f$, i.e.,
\begin{align*}
\divDiv\nana u=\Delta^{2}u&=f
&
&\text{in }\om,\\
u&=0
&
&\text{on }\gat,\\
\na u&=0
&
&\text{on }\gat,\\
(\nana u)\,n&=0
&
&\text{on }\gan,\\
(\Div\nana u)\cdot n&=0
&
&\text{on }\gan.
\end{align*}
To find $u\in\H^{2}_{\gat}(\om)$ by variational methods we may consider \eqref{eq:varform2}, i.e.,
$$\forall\,\varphi\in\H^{2}_{\gat}(\om)\qquad
\scp{\nana u}{\nana\varphi}_{\Lt(\om)}
=\scp{f}{\varphi}_{\Lt(\om)}.$$

\section*{Acknowledgement}

For the second author it is a pleasure to thank Filippo Gazzola and Pier Domenico Lamberti 
for some useful conversations about boundary value problems for the biharmonic operator
and INdAM-GNCS for the support.

\bibliographystyle{plain} 
\bibliography{pv}

@ARTICLE
{AH2021a,
 author    = {Arnold, D.N. and Hu, K.},
 title     = {Complexes from complexes},
 journal   = {Found. Comput. Math.},
 volume    = {21},
 number    = {6},
 pages     = {1739-1774},
 year      = {2021}
}

@ARTICLE
{BPS2016a,
author	= {Bauer, S. and Pauly, D. and Schomburg, M.},
title	= {The {M}axwell Compactness Property in Bounded Weak {L}ipschitz Domains with Mixed Boundary Conditions},
journal	= {SIAM J. Math. Anal.},
year 	= {2016},
volume 	= {48},
number 	= {4},
pages  	= {2912-2943}
}

@ARTICLE
{HPS2023a,
 author    = {Hiptmair, R. and Pauly, D. and Schulz, E.},
 title     = {Traces for {H}ilbert Complexes},
 journal   = {J. Funct. Anal.},
 year      = {2023},
 volume    = {284},
 number    = {10},
 pages     = {50pp}
}

@ARTICLE
{P2017a,
 author    = {Pauly, D.},
 title     = {On the {M}axwell Constants in 3{D}},
 journal   = {Math. Methods Appl. Sci.},
 volume    = {40},
 number    = {2},
 year      = {2017},
 pages     = {435-447}
}

@ARTICLE
{P2019a,
 author    = {Pauly, D.},
 title     = {On the {M}axwell and {F}riedrichs/{P}oincar\'e Constants in {ND}},
 journal   = {Math. Z.},
 volume    = {293},
 number    = {3-4},
 year      = {2019},
 pages     = {957-987}
}

@ARTICLE
{P2019b,
 author    = {Pauly, D.},
 title     = {A Global div-curl-Lemma for Mixed Boundary Conditions in Weak {L}ipschitz Domains and a Corresponding Generalized 
 ${A}_{0}^{*}$-${A}_{1}$-Lemma in {H}ilbert Spaces },
 journal   = {Analysis (Munich)},
 volume    = {39},
 number    = {2},
 year      = {2019},
 pages     = {33-58}
}

@ARTICLE
{P2020a,
 author    = {Pauly, D.},
 title     = {Solution Theory, Variational Formulations, and Functional a Posteriori Error Estimates 
 for General First Order Systems with Applications to Electro-Magneto-Statics and More},
 journal   = {Numer. Funct. Anal. Optim.},
 volume    = {41},
 number    = {1},
 year      = {2020},
 pages     = {16-112}
}

@ARTICLE
{PS2022a,
 author    = {Pauly, D. and Schomburg, M.},
 title     = {{H}ilbert Complexes with Mixed Boundary Conditions -- {P}art 1: {D}e {R}ham Complex},
 journal   = {Math. Methods Appl. Sci.},
 year      = {2022},
 volume    = {45},
 number    = {5},
 pages     = {2465-2507}
}

@ARTICLE
{PS2022b,
 author    = {Pauly, D. and Schomburg, M.},
 title     = {{H}ilbert Complexes with Mixed Boundary Conditions -- {P}art 2: Elasticity Complex},
 journal   = {Math. Methods Appl. Sci.},
 year      = {2022},
 volume    = {45},
 number    = {16},
 pages     = {8971-9005}
}

@ARTICLE
{PS2024a,
 author    = {Pauly, D. and Schomburg, M.},
 title     = {{H}ilbert Complexes with Mixed Boundary Conditions -- {P}art 3: Biharmonic Complexes},
 journal   = {Math. Methods Appl. Sci.},
 year      = {2024},
 volume    = {47},
 number    = {6},
 pages     = {3847-3892}
}

@ARTICLE
{PV2020a,
 author    = {Pauly, D. and Valdman, J.},
 title     = {Poincar\'e-{F}riedrichs type constants for operators involving grad, curl, and div: theory and numerical experiments},
 journal   = {Comput. Math. Appl.},
 volume    = {79},
 number    = {11},
 year      = {2020},
 pages     = {3027-3067}
}

@ARTICLE
{PZ2020a,
 author    = {Pauly, D. and Zulehner, W.},
 title     = {The div{D}iv-Complex and Applications to Biharmonic Equations},
 journal   = {Appl. Anal.},
 volume    = {99},
 number    = {9},
 year      = {2020},
 pages     = {1579-1630}
}

@ARTICLE
{PZ2023a,
 author    = {Pauly, D. and Zulehner, W.},
 title     = {The Elasticity Complex: Compact Embeddings and Regular Decompositions},
 journal   = {Appl. Anal.},
 volume    = {102},
 number    = {16},
 year      = {2023},
 pages     = {4393-4421}
}

@ARTICLE
{PW1960a,
 author    = {Payne, L.E. and Weinberger, H.F.},
 title     = {An optimal {P}oincar\'e Inequality for Convex Domains},
 journal   = {Arch. Rational Mech. Anal.},
 year      = {1960},
 volume    = {5},
 pages     = {286-292}
}

@book{RenRog,
	Author = {Renardy, M. and Rogers, R. C.},
	Date-Added = {2023-06-27 16:21:38 +0200},
	Date-Modified = {2023-06-27 16:50:46 +0200},
	Edition = {2nd},
	Publisher = {Springer, New York},
	Title = {An introduction to partial differential equations},
	Year = {2004}}

@book{couhil,
	Author = {Courant, R. and Hilbert, D.},
	Date-Added = {2023-05-29 16:46:36 +0200},
	Date-Modified = {2023-05-29 16:46:36 +0200},
	Publisher = {Interscience Publishers, Inc. New York},
	Title = {Methods of mathematical physics. Vol. 1},
	Year = {1953}}

@article{beg,
	Author = {Begehr, H.},
	Date-Added = {2023-05-18 13:45:09 +0200},
	Date-Modified = {2023-05-18 13:58:51 +0200},
	Journal = {Matematiche (Catania)},
	Pages = {395--405},
	Title = {Biharmonic {G}reen functions},
	Volume = {61},
	Year = {2006}}

@book{AlbV,
	Author = {Valli, A.},
	Date-Added = {2023-05-18 13:43:43 +0200},
	Date-Modified = {2023-06-27 16:50:25 +0200},
	Edition = {2nd},
	Publisher = {Springer, Cham},
	Title = {A compact course on linear PDEs.},
	Year = {2023}}

@book{Wloka,
	Author = {Wloka, J.},
	Date-Added = {2023-05-18 13:37:06 +0200},
	Date-Modified = {2023-05-18 13:40:56 +0200},
	Publisher = {Cambridge University Press, Cambridge},
	Title = {Partial differential equations},
	Year = {1987}}

@article{NegMon,
	Author = {Negr\'{o}n-Marrero, P.V. and Montes-Pizarro, E.},
	Date-Added = {2023-05-18 13:11:30 +0200},
	Date-Modified = {2023-05-18 13:13:56 +0200},
	Journal = {New York J. Math.},
	Pages = {245--265},
	Title = {The complementing condition and its role in a bifurcation theory applicable to nonlinear elasticity},
	Volume = {17A},
	Year = {2011}}

@article{ver,
	Author = {Verchota, G.C.},
	Date-Added = {2023-05-18 13:06:28 +0200},
	Date-Modified = {2023-05-18 13:51:19 +0200},
	Journal = {Acta Math.},
	Pages = {217--279},
	Title = {The biharmonic {N}eumann problem in {L}ipschitz domains},
	Volume = {194},
	Year = {2005}}

@article{ADN,
	Author = {Agmon, S. and Douglis, A. and Nirenberg, L.},
	Date-Added = {2023-05-18 13:04:40 +0200},
	Date-Modified = {2023-05-18 13:06:13 +0200},
	Journal = {Comm. Pure Appl. Math.},
	Pages = {623--727},
	Title = {Estimates near the boundary for solutions of elliptic partial differential equations satisfying general boundary conditions. {I}},
	Volume = {12},
	Year = {1959}}

@article{prov,
	Author = {Provenzano, L.},
	Date-Added = {2023-05-18 12:59:24 +0200},
	Date-Modified = {2023-05-18 13:52:03 +0200},
	Journal = {Math. Methods Appl. Sci.},
	Pages = {1005--1012},
	Title = {A note on the {N}eumann eigenvalues of the biharmonic operator},
	Volume = {41},
	Year = {2018}}

@book{GGS,
	Author = {Gazzola, F. and Grunau, H.-C. and Sweers, G.},
	Date-Added = {2023-05-18 12:50:43 +0200},
	Date-Modified = {2023-05-18 12:57:37 +0200},
	Publisher = {Springer, Berlin},
	Title = {Polyharmonic boundary value problems},
	Year = {2010}}

@book{LioMag,
	Author = {Lions, J.-L. and Magenes, E.},
	Date-Modified = {2023-05-31 08:39:10 +0200},
	Publisher = {Springer, Berlin},
	Title = {Non-homogeneous boundary value problems and applications. Vol. 1},
	Year = {1972}}

\vspace*{5mm}
\hrule
\vspace*{3mm}

\end{document}